\newcommand{\red}{}
\newtheorem{lemma}{Lemma} 
\newtheorem{theo}[lemma]{Theorem}
\newtheorem{prop}[lemma]{Proposition}
\newtheorem{cor}[lemma]{Corollary}
\theoremstyle{definition}
\newtheorem{defin}[lemma]{Definition}
\newtheorem{remark}[lemma]{Remark}
\newcommand{\Leib}{{\rm Leib}}
\numberwithin{equation}{section}
\newenvironment{eq}{\begin{equation}}{\end{equation}}
\newcommand{\Char}{\mathop{\rm char}}
\newcommand{\FF}{\mathbb{F}}
\newcommand{\CC}{\mathbb{C}}
\newcommand{\NN}{\mathbb{N}}
\newcommand{\algA}{\mathcal{A}}
\newcommand{\algL}{\mathcal{L}}
\newcommand{\algV}{\mathcal{V}}
\newcommand{\algX}[1]{{\rm alg}_{\FF}\{X\}_{#1}}
\newcommand{\calcP}{{\mathcal P}}
\newcommand{\ncl}{{\rm ncl}}
\newcommand{\diag}[1]{{\rm diag}\{#1\}}
\newcommand{\ML}[1]{M_{\rm L}^{(i)}} % n x n matrix, which is obtained from the table of multiplication M
\newcommand{\MR}[1]{M_{\rm R}^{(i)}} % n x n matrix, which is obtained from the table of multiplication M
\newcommand{\MdL}[2]{M_{\rm L}^{(#1,#2)}} % n x n matrix, which is obtained from the table of multiplication M
\newcommand{\MdR}[2]{M_{\rm R}^{(#1,#2)}} % n x n matrix, which is obtained from the table of multiplication M
\newcommand{\al}{\alpha}
\newcommand{\be}{\beta}
\newcommand{\ga}{\gamma}
\newcommand{\la}{\lambda}
\newcommand{\de}{\delta}
\newcommand{\LA}{\langle}
\newcommand{\RA}{\rangle}
\newcommand{\un}[1]{{\underline{#1}} }
\newcommand{\alg}{\mathop{\rm alg}}
\newcommand{\mdeg}{\mathop{\rm mdeg}}
\newcommand{\tr}{\mathop{\rm tr}}
\newcommand{\Tr}{\mathop{\rm Tr}}
\newcommand{\Aut}{\mathop{\rm Aut}}
\newcommand{\GL}{{\rm GL}}
\newcommand{\Ann}{{\rm Ann}}
\newcommand{\matr}[4]{\left(\begin{array}{cc}
#1 & #2 \\
#3 & #4 \\
\end{array}\right)}
\newcommand{\matrThree}[9]{\left(\begin{array}{ccc}
#1 & #2 & #3\\
#4 & #5 & #6 \\
#7 & #8 & #9 \\
\end{array}\right)}
\newcommand{\OO}{\mathbf{O}}
\begin{document}

\noindent{\Large
Polynomial invariants for $3$-dimensional Leibniz algebras}
 \footnote{
The work is supported by the FCT 2023.08031.CEECIND, UIDB/00212/2020, UIDP/00212/2020, and FAPESP 2023/17918-2}

 \bigskip

\begin{center}

 {\bf
   Ivan Kaygorodov\footnote{CMA-UBI, University of  Beira Interior, Covilh\~{a}, Portugal;  \ kaygorodov.ivan@gmail.com} 
   \footnote{Moscow Center for Fundamental and Applied Mathematics, Moscow, Russia}
   \&
   Artem Lopatin\footnote{ 
State University of Campinas, Campinas,  Brazil;\ dr.artem.lopatin@gmail.com 
}}

\end{center}

\ 

\noindent {\bf Abstract:}
{\it For each $3$-dimensional non-Lie Leibniz algebra over the complex numbers, we describe the algebra of polynomial invariants and determine its group of automorphisms. As a consequence, we establish that any two non-nilpotent $3$-dimensional non-Lie Leibniz algebras can be distinguished by the traces of degrees $\leqslant 2$ and by the dimensions of their automorphism groups.}

 \bigskip 

\noindent {\bf Keywords}:
{\it 
 Leibniz algebras, polynomial invariants, generating set.}

\bigskip 

 \
 
\noindent {\bf MSC2020}:  
13A50, 15A72, 17A32, 17A36, 20F29.

	 \bigskip

\ 

\

%%%%%%%%%%%%%%%%%%%%%%%%%%%%%%%%%%%%%%%%%%%%%%%%%%%%%%%%%%%%%%%%%%%%%

\tableofcontents

%========================================================
%S1======================================================
\section{Introduction}\label{section_intro}

\subsection{Algebra of invariants}\label{section_alg_inv}   Assume that $\FF$ is an algebraically closed field of an arbitrary characteristic $\Char{\FF}$. All vector spaces and algebras are over $\FF$.     

Consider an algebra $\algA$ is of dimension $n$, i.e., $\algA$ is a vector space with a bilinear multiplication, and a subgroup $G$ of the group of all automorphisms $\Aut(\algA)\leqslant \GL_n$ of $\algA$. The group $G$ naturally acts on $\algA^m = \algA \oplus \cdots \oplus \algA$ ($m$ times) diagonally: $g\un{a} = (g\cdot a_1,\ldots,g\cdot a_m)$ for each $g\in G$ and $\un{a}=(a_1,\ldots,a_m)$ from $\algA^m$. The coordinate ring of the vector space $\algA^m$ is a polynomial (i.e. commutative and associative) algebra 
$$\FF[\algA^m] = \FF[x_{ri}\,|\, 1\leqslant r\leqslant m, \; 1\leqslant i\leqslant n].$$
Fix some basis $\{e_1,\ldots,e_n\}$ for $\algA$, and for every $a\in\algA$ denote by $(a)_i$ the $i-{\rm th}$ coordinate of $a$ with respect the given basis. 
We can consider elements of $\FF[\algA^m]$  as polynomial functions $\algA^m\to\FF$, since   $x_{ri}$ can be interpreted as a function $\algA^m\to\FF$ given by $x_{ri}(\un{a}) = (a_r)_i$. 
The action of $G$ on $\algA^m$ induces the action of the coordinate ring $\FF[\algA^m]$ as follows: $(g\cdot f)(\un{a}) = f(g^{-1}\cdot\un{a})$ for all $g\in G$, $f\in \FF[\algA^m]$ and $\un{a}\in\algA^m$. The algebra of {\it $G$-invariants of the $m$-tuple of the algebra  $\algA$} is
$$\FF[\algA^m]^{G}=\{f\in \FF[\algA^m]\,|\,gf=f \text{ for all }g\in G\},$$
or, equivalently, 
$$\FF[\algA^m]^{G}=\{f\in \FF[\algA^m]\,|\,f(g\un{a})=f(\un{a}) \text{ for all }g\in G,\; \un{a}\in \algA^m\}.$$

For short, this algebra of $\Aut(\algA)$-invariants of the $m$-tuple of $\algA$ is called the algebra of {\it invariants of the $m$-tuple of $\algA$}, and we denote it by
$$I_m(\algA):=\FF[\algA^m]^{\Aut(\algA)}.$$%
It is well known that the so-called operator traces (see Section~\ref{section_tr} for the details) are $\Aut(\algA)$-invariants. Denote by $\Tr(\algA)_m$ the subalgebra of $I_m(\algA)$ generated by all operator traces together with $1$. We say that the {\it Artin--Procesi--Iltyakov Equality} holds for $\algA^m$ if 
$$I_m(\algA) = \Tr(\algA)_m.$$%
The Artin--Procesi--Iltyakov Equality is known to be valid for $\algA^m$ with $m\geqslant 1$ for the following algebras:
\begin{enumerate}
\item[$\bullet$] $\algA=M_n$ is the algebra of all $n\times n$ matrices over $\FF$, in case $\Char{\FF}=0$ or $\Char{\FF}>n$. Generators for the algebra of invariants $I_m(M_n)$ were independently described in~\cite{Sibirskii68} and~\cite{Procesi76} in the case of characteristic zero, and in~\cite{Donkin92a} in the case of positive characteristic. Minimal generating sets for $I_m(M_2)$ and $I_m(M_3)$ were obtained in~\cite{Procesi84, DKZ02,  Lopatin_Comm1, Lopatin_Sib, Lopatin_Comm2} and for $I_m(M_4)$ and $I_m(M_5)$ with small values of $m$, in~\cite{Teranishi86, Drensky_Sadikova_4x4,Djokovic07}, assuming $\Char\FF=0$. 

\item[$\bullet$] $\algA=\OO$ is the octonion algebra, in case $\Char{\FF}\neq2$. Generators for the algebra of invariants $I_m(\OO)$ were constructed in~\cite{schwarz1988} over the field of complex numbers $\CC$ and in~\cite{zubkov2018} over an arbitrary infinite field of odd characteristic. In case $\Char{\FF}\neq2$, a minimal generating set was constructed~\cite{LZ_2}, using the classification of pairs of octonions from~\cite{LZ_1}.

\item[$\bullet$] $\algA={\mathbb A}$ is the split Albert algebra, i.e., the exceptional simple Jordan algebra of $3\times 3$ Hermitian matrices over $\OO$ with the symmetric multiplication $a\circ b = (ab+ba)/2$, in case $\Char{\FF}=0$ and $m\in\{1,2\}$ (see~\cite{Iltyakov_1995, Polikarpov_1991}).

{\red
\item[$\bullet$] $\algA$ is a two-dimensional simple algebra with a non-trivial automorphism group, in case $\Char{\FF}=0$ (see~\cite{Alvarez_Lopatin_2025}).
}
\end{enumerate}

\noindent{}More details about the Artin--Procesi--Iltyakov Equality can be found in~\cite{Alvarez_Lopatin_2025}.

%========================================================
\subsection{Leibniz algebras}\label{section_Leibniz} 
 An algebra $\algL$ is called a (right) {\it   Leibniz  algebra}  if it satisfies the identity
$$(xy)z=(xz)y+x(yz).$$ 

Leibniz algebras present a "non antisymmetric" generalization of Lie algebras.
It was at first introduced by A. Bloh~\cite{Bloh_1965} and then
independently by J.-L. Loday~\cite{L93a}. %... Kontsevich  [in 1980s] 
Recently, they appeared in many geometric and physics applications (see, for example, 
\cite{bonez,   kotov20,   KW01, strow20} and references therein).
A systematic study of algebraic properties of Leibniz algebras was started by J.-L. Loday and T. Pirashvili~\cite{L93}.
So, several classical theorems from Lie algebras theory have been extended
to the Leibniz algebras case;
many classification results regarding nilpotent, solvable, etc. Leibniz algebras
have been obtained 
(see, for example, \cite{FKS_2025, AKS, deMello_Souza_2023, CK,FW,KM,kky22, MS, TX} and references therein);
for results on algebraic and geometric classifications of low dimensional Leibniz algebras see \cite{k23,KKP} and references therein.
%Leibniz algebras are   terminal algebras. Symmetric Leibniz algebras (i.e. left and right Leibniz algebras) are Poisson-admissible algebras.

Given a Leibniz algebra $\algL$, we denote by $\Leib(\algL)$ the $\FF$-span of $a^2$ for all $a\in\algL$. Since
$$yx^2 = 0 \;\text{ and }\; x^2 y = (x^2 + y)^2 - (x^2)^2 - y^2,$$%
we have that $\Leib(\algL)$ is an ideal of $\algL$. Note that $\Leib(\algL)$ is non-zero if and only if $\algL$ is not a Lie algebra. On the other hand, if $\Leib(\algL)=\algL$, then $\algL$ has zero multiplication. Therefore, non-Lie Leibniz algebra is never simple.

%========================================================
\subsection{Results}\label{section_results} In this section, we assume that $\FF = \CC$ is the field of complex numbers. Denote by ${\bf L}_3$ the family of all $3$-dimensional non-Lie Leibniz algebras over the field $\FF$. The classification of algebras from ${\bf L}_3$ modulo the action of the automorphism group was independently obtained by:
{\red
\begin{enumerate}
    \item[$\bullet$] J.M.~Casas, M.A.~Insua, M.~Ladra, and S.~Ladra~\cite{Casas_Insua_Ladra_Ladra_2012_LAA}, by means of a computer program;
    \item[$\bullet$] I.M.~Rikhsiboev and I.S.~Rakhimov~\cite{Rikhsiboev_Rakhimov_2012_AIP}, without the use of a computer. Unfortunately, this classification is not complete (see the proof of Theorem~\ref{theo_Leib3} for details).
\end{enumerate}
In our paper, we complete the classification given in~\cite{Rikhsiboev_Rakhimov_2012_AIP} in Theorem~\ref{theo_Leib3}, and we prove its equivalence with the classification obtained in~\cite{Casas_Insua_Ladra_Ladra_2012_LAA} (see again the proof of Theorem~\ref{theo_Leib3}).}

In Section~\ref{section_prelim}, we provide the key definitions of generic elements and operator traces. In Section~\ref{section_gen}, we present explicit formulas for calculating certain operator traces (see Proposition~\ref{prop_trace}) and describe the reduction of the general case to the multilinear case in characteristic zero (see Proposition~\ref{prop_char0}).

Assume $\algL\in {\bf L}_3$. In Sections~\ref{section_simple2dim}--\ref{section_inv} we assume that $\FF=\CC$ is the field of complex numbers. In Section~\ref{section_simple2dim} we explicitly describe the group $\Aut(\algL)$ of all automorphisms of $\algL$ (see Theorem~\ref{theo_aut}).  Then in Section~\ref{section_invariants} we obtain a generating set for the algebra of invariants $I_m(\algL)$ (see Theorem~\ref{theo_main}). As a consequence, we describe when the Artin--Procesi--Iltyakov Equality holds for $\algL$ (see Corollary~\ref{cor_API}). In Corollary~\ref{cor_nilp} we obtain $I_m(\algL)=\FF$ for all $m\geqslant1$ if and only if $\algL$ is nilpotent.
As a consequence, in Corollary~\ref{cor_separ} we show that any two non-nilpotent algebras from ${\bf L}_3$ can be distinguished by means of 
\begin{enumerate}
\item[$\bullet$] the traces of degrees $\leqslant2$, 

\item[$\bullet$] the dimensions of its groups of automorphisms.
\end{enumerate}%

Note that the problem of distinguishing low-dimensional algebras using their algebraic properties has recently been studied. Specifically, in \cite{diniz2024isomorphism}, it was shown that any two-dimensional Jordan algebras over a finite field $\FF$ with odd characteristic are isomorphic if and only if they satisfy the same polynomial identities. In \cite{PIs_Novikov_dim2_2025}, it was established that polynomial identities distinguish non-associative two-dimensional Novikov algebras over the complex numbers.

%========================================================
\subsection{Notations}\label{section_notations} 

A monomial $w= x_{r_1,i_1}\cdots x_{r_k,i_k}$ from $\FF[\algA^m]$ has {\it multidegree} $\mdeg(w)=(\de_1,\ldots,\de_m)\in\NN^m$, where $\de_r$ is the number of letters of $w$ lying in the set $\{x_{r1},\ldots, x_{rn}\}$ and $\NN=\{0,1,2,\ldots\}$. As an example, $\mdeg(x_{31}x_{32}x_{21}x_{12})=(1,1,2)$ for $m=3$. For short, we denote the multidegree $(1,\ldots,1)\in\NN^m$ by $1^m$. If $f\in\FF[\algA^m]$ is a linear combination of monomials of the same multidegree $\un{d}$, then we say that $f$ is $\NN^m$-{\it homogeneous} of multidegree $\un{d}$. In other words, we have defined the $\NN^m$-grading of $\FF[\algA^m]$ by multidegrees. Since $\FF$ is infinite, the algebra of invariants $\FF[\algA^m]^G$ also has the $\NN^m$-grading by multidegrees. An $\NN$-homogeneous element $f\in\FF[\algA^m]$ of multidegree $(d_1,\ldots,d_m)$ with $d_1,\ldots,d_m\in\{0,1\}$ is called {\it multilinear}. For short, we denote by  $x_{r_1,i_1}\cdots\widehat{x_{r_l,i_l}}\cdots x_{r_k,i_k}$ the monomial $x_{r_1,i_1}\cdots x_{r_{l-1},i_{l-1}} x_{r_{l+1},i_{l+1}} \cdots x_{r_k,i_k}$ from   $\FF[\algA^m]$, where $1\leqslant l\leqslant k$. Given a subset $S\subset\algA$, we denote by $\alg\{S\}$ the subalgebra of $\algA$  (without unity in general) generated by $S$.
We write $\FF^{\times}$ for the set of non-zero elements of $\FF$.

% Given a vector $\un{r}=(r_1,\ldots,r_k)$, we write $\#\un{r}=k$ and $|\un{r}|=r_1+\cdots + r_k$.

If there exists $k\geqslant1$ such that for every $a_1,\ldots,a_k\in \algA$ each product of $a_1,\ldots,a_k$ is zero in $\algA$, then the algebra $\algA$ is called {\it nilpotent} and the least possible $k\geqslant1$ with the above property is called the {\it nilpotency class} $\ncl(\algA)$ for $\algA$; otherwise, we write $\ncl(\algA)=\infty$.

%========================================================
%S======================================================
\section{Preliminaries on invariants}\label{section_prelim}

\subsection{Generic elements} 
To explicitly define the action of $G$ on $\FF[\algA^m]$ consider  the algebra $\widehat{\algA}_m=\algA \otimes_{\FF} \FF[\algA^m]$, which is a $G$-module, where the multiplication and the $G$-action are defined as follows: $(a\otimes f)(b\otimes h)=ab \otimes fh$ and $g\bullet (a\otimes f) = g\cdot a \otimes f$ for all $g\in G$, $a,b\in\algA$ and $f,h\in\FF[\algA^m]$. Define by 
$$X_r = \left(\begin{array}{c}
x_{r1} \\
\vdots \\
x_{rn} \\
\end{array}\right):=e_1 \otimes x_{r1} + \cdots + e_n \otimes x_{rn}$$
the {\it generic elements} of $\widehat{\algA}_m$, where $1\leqslant r\leqslant m$. In particular, we have 
$$g\bullet X_r = 
\left(\begin{array}{c}
gx_{r1} \\
\vdots \\
gx_{rn} \\
\end{array}\right) 
\in\widehat{\algA}_m.$$%\

For $g\in G\leqslant \GL_n$ we write $[g]$ for the corresponding $n\times n$ matrix. By straightforward calculations we can see that 
\begin{eq}\label{eq_action}
g\bullet X_r = [g]^{-1}X_r.
\end{eq}%

Denote by $\algX{m}$ the subalgebra of $\widehat{\algA}_m$  generated by the generic elements $X_1,\ldots,X_m$ and the unity $1$. Any product of the generic elements is called a word of $\algX{m}$. Since $G\leqslant \Aut(\algA)$, we  have that 
\begin{eq}\label{eq_action_prod}
g\bullet (FH) = (g\bullet F)(g\bullet H)
\end{eq}%
for all $F,H$ from $\algX{m}$.

%========================================================
\subsection{Operator traces}\label{section_tr} 

For $a\in\algA$ denote by $L_a:\algA \to\algA$ and  $R_a:\algA \to\algA$  the operators of the left and right multiplication by $a$, respectively. Then define the {\it left operator trace} $\tr_{\rm L}:\algA \to \FF$ by $\tr_{\rm L}(a)=\tr(L_a)$ and the {\it right operator trace} $\tr_{\rm R}:\algA \to \FF$ by $\tr_{\rm R}(a)=\tr(R_a)$. 

To expand these constructions, we denote by $\FF\LA \chi_0,\ldots,\chi_m\RA$ the absolutely free unital algebra in letters $\chi_0,\chi_1,\ldots,\chi_m$ and for $f\in \FF\LA \chi_0,\ldots,\chi_m\RA$, $\un{a}=(a_0,\ldots,a_m)\in\algA^{m+1}$ define $f(\un{a})\in\algA$ as the result of substitutions $\chi_0\to a_0,\, \chi_1\to a_1,\ldots,\chi_m\to a_m$ in $f$. For $h\in\FF\LA \chi_0,\ldots,\chi_m\RA$ denote by $L_h,R_h:\FF\LA \chi_0,\ldots,\chi_m\RA \to \FF\LA \chi_0,\ldots,\chi_m\RA$ the operators of the left and right multiplication by $h$, respectively.   As usually, the composition of maps $P_1,P_2:\algA\to \algA$ is denoted by $P_1\circ P_2 (a)=P_1(P_2(a))$, $a\in\algA$. Similar notation we use for composition of maps $P_1,P_2:\FF\LA \chi_0,\ldots,\chi_m\RA \to \FF\LA \chi_0,\ldots,\chi_m\RA$. For a symbol $P\in\{L,R\}$, we write 
\begin{enumerate}
\item[$\bullet$] $P_a:\algA\to\algA$ for the operator of left or right multiplication by $a\in\algA$;

\item[$\bullet$] $P_h:\FF\LA \chi_0,\ldots,\chi_m\RA \to \FF\LA \chi_0,\ldots,\chi_m\RA$ for the operator of left or right multiplication by $h\in\FF\LA \chi_0,\ldots,\chi_m\RA$.
\end{enumerate}

\noindent{}The following definition of the operator trace generalizes definitions of the left and right operator traces.

%------------------------------------------------------    
\begin{defin}\label{def1}
Assume that $m\geqslant1$ and $h\in \FF\LA \chi_0,\ldots,\chi_m\RA$ is homogeneous of degree 1 in $\chi_{0}$, i.e., each monomial of $h$ contains $\chi_{0}$ exactly once. Then 
\begin{enumerate}
\item[$\bullet$] for every $\un{a}\in\algA^m$ define the linear operator $h(\,\cdot\,,\un{a}):\algA\to\algA$  by the following equality: $h(\,\cdot\,,\un{a})(b) = h(b,a_1,\ldots, a_m)$ for all $b\in\algA$;

\item[$\bullet$]  define {\it operator trace} as follows: $\tr(h):\algA^m \to \FF$, where $\tr(h)(\un{a})=\tr(h(\,\cdot\,,\un{a}))$ for all $\un{a}\in\algA^m$.
\end{enumerate}
\end{defin}

As an example, $\tr(\chi_0)=n$, $\tr_{\rm L}=\tr(\chi_1\chi_0)$, and $\tr_{\rm R}=\tr(\chi_0\chi_1)$. 

%------------------------------------------------------ 
\begin{remark}\label{remark_key} Assume that $h\in \FF\LA \chi_0,\ldots,\chi_m\RA$ is homogeneous of degree 1 in $\chi_{0}$.
\begin{enumerate}
\item[(a)] It is easy to see that $\tr(h) \in \FF[\algA^m]$.

\item[(b)] It is well known that $\tr(h)$ is an invariant from $I_m(\algA)$ (for example, see Lemma~3.1 from~\cite{Alvarez_Lopatin_2025}). 

\item[(c)] If $h$ has multidegree $(1,\de_1,\ldots,\de_m)\in\NN^{m+1}$, then $\tr(h)$ is $\NN^m$-homogeneous of multidegree $(\de_1,\ldots,\de_m)$.
\end{enumerate}
\end{remark}

%------------------------------------------------------
\begin{defin}\label{def3} Denote by $\Tr(\algA)_m$ the subalgebra of $I_m(\algA)$ generated by $1$ and operator traces $\tr(h)$ for all that $h\in \FF\LA \chi_0,\ldots,\chi_m\RA$ homogeneous of degree 1 in $\chi_{0}$.
\end{defin}

%========================================================
%S2======================================================
%\section{Examples}\label{section_example}

%In this section we show that Sibirsky--Procesi--Iltyakov Equality holds for $\GL_n$-invariants of several $n\times n$ matrices in case $p=0$ or $p>n$ and for $\G$-invariants of several octonions in case $p\neq 2$.

%========================================================
%\subsection{Matrix invariants} \label{section_example_matrix}

%========================================================
%\subsection{Invariants of octonions}\label{section_example_octonion}

%========================================================
%========================================================
\section{Generators for algebras of invariants}\label{section_gen}
%------------------------------------------------------    

%========================================================
\subsection{Traces}\label{section_trace_alg}

 Assume that the tableau of multiplication of the algebra $\algA$ is $M=(M_{ij})_{1\leqslant i,j\leqslant n}$, i.e., $e_ie_j=M_{ij}$ in $\algA$. Denote
$$M_{ij}=\sum_{l=1}^n M_{ijl} e_l$$
for some $ M_{ijl}\in\FF$. For every $1\leqslant i\leqslant n$ consider the following $n\times n$ matrices over $\FF$: 
$$\ML{i}=(M_{ijl})_{1\leqslant l,j\leqslant n} \;\; \text{ and  }\;\;
\MR{i}=(M_{jil})_{1\leqslant l,j\leqslant n}.$$

%------------------------------------------------------ 
\begin{prop}[Proposition 3.2 of~\cite{Alvarez_Lopatin_2025}]\label{prop_trace}
Assume that $m\geqslant1$ and a homogeneous monomial $h\in \FF\LA \chi_0,\ldots,\chi_m\RA$ of degree 1 in $\chi_{0}$ satisfies the equality $h= P^1_{\chi_{r_1}} \circ \cdots \circ P^k_{\chi_{r_k}} (\chi_0)$ for some symbols $P^1,\ldots,P^k\in\{L,R\}$, $1\leqslant r_1,\ldots,r_k\leqslant m$ and $k>0$. Then 

$$\tr(h)=\sum_{1\leqslant i_1,\ldots,i_k\leqslant n} \tr\!\Big(M_{P^1}^{(i_1)} \cdots M_{P^k}^{(i_k)}\Big) x_{r_1,i_1} \cdots x_{r_k,i_k}.$$
\end{prop}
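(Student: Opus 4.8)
The plan is to compute $\tr(h)$ pointwise and then reassemble it as a polynomial. Fix an arbitrary $\un{a}=(a_1,\ldots,a_m)\in\algA^m$ and substitute $\chi_{r_j}\to a_{r_j}$ into $h$; by Definition~\ref{def1} the operator $h(\,\cdot\,,\un{a}):\algA\to\algA$ is then exactly the composition
\begin{eq}\label{eq_plan_comp}
h(\,\cdot\,,\un{a}) = P^1_{a_{r_1}} \circ \cdots \circ P^k_{a_{r_k}}
\end{eq}
of left/right multiplication operators on $\algA$. Hence $\tr(h)(\un{a})=\tr\big(P^1_{a_{r_1}} \circ \cdots \circ P^k_{a_{r_k}}\big)$, and it remains to express this scalar in terms of the coordinates $(a_r)_i=x_{r,i}(\un{a})$.

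The key observation is that $M_{\rm L}^{(i)}$ and $M_{\rm R}^{(i)}$ are precisely the matrices of $L_{e_i}$ and $R_{e_i}$ in the basis $\{e_1,\ldots,e_n\}$. Indeed, $L_{e_i}(e_j)=e_ie_j=\sum_l M_{ijl}e_l$ shows that the $(l,j)$-entry of the matrix of $L_{e_i}$ equals $M_{ijl}$, which is exactly the definition of $M_{\rm L}^{(i)}=(M_{ijl})_{l,j}$; the computation $R_{e_i}(e_j)=e_je_i=\sum_l M_{jil}e_l$ gives the analogous statement for $M_{\rm R}^{(i)}$. Since multiplication is bilinear, for each $r$ the operator $P_{a_r}$ depends linearly on $a_r=\sum_i (a_r)_i\,e_i$, so
\begin{eq}\label{eq_plan_lin}
P_{a_r} = \sum_{i=1}^n (a_r)_i\, M_P^{(i)} \quad\text{for }P\in\{L,R\}.
\end{eq}

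Substituting \eqref{eq_plan_lin} into \eqref{eq_plan_comp} and using that composition of operators corresponds to the order-preserving product of their matrices, I would expand the product multilinearly to obtain
\begin{eq}\label{eq_plan_expand}
h(\,\cdot\,,\un{a}) = \sum_{1\leqslant i_1,\ldots,i_k\leqslant n} (a_{r_1})_{i_1}\cdots (a_{r_k})_{i_k}\; M_{P^1}^{(i_1)}\cdots M_{P^k}^{(i_k)}.
\end{eq}
Applying $\tr$ and invoking its linearity then expresses $\tr(h)(\un{a})$ as the sum over $i_1,\ldots,i_k$ of $\tr\big(M_{P^1}^{(i_1)}\cdots M_{P^k}^{(i_k)}\big)$ times $(a_{r_1})_{i_1}\cdots (a_{r_k})_{i_k}$. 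Finally, reading each $(a_{r_j})_{i_j}$ as the value $x_{r_j,i_j}(\un{a})$ and noting that $\un{a}$ was arbitrary promotes this pointwise identity to the asserted identity of polynomials in $\FF[\algA^m]$.

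This is essentially a direct computation, so I expect no deep obstacle; the only points demanding care are the correct identification of $M_{\rm L}^{(i)}$ and $M_{\rm R}^{(i)}$ with the multiplication matrices (keeping the row/column indexing straight) and the verification that operator composition in \eqref{eq_plan_comp} matches matrix multiplication in the same order, so that the factors $M_{P^j}^{(i_j)}$ appear in \eqref{eq_plan_expand} in the order $j=1,\ldots,k$ rather than reversed.
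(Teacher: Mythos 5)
Your argument is correct: the identification of $M_{\rm L}^{(i)}$, $M_{\rm R}^{(i)}$ with the matrices of $L_{e_i}$, $R_{e_i}$, the multilinear expansion of $P^1_{a_{r_1}}\circ\cdots\circ P^k_{a_{r_k}}$, and the passage from the pointwise identity to a polynomial identity (valid since $\FF$ is infinite) are all sound, and you correctly handle the one delicate point, namely that the composition convention $P_1\circ P_2(a)=P_1(P_2(a))$ makes the matrix factors appear in the order $j=1,\ldots,k$. The paper itself states this as Proposition~3.2 of the cited reference and gives no proof, and your direct computation is exactly the standard argument one would expect there.
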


\noindent{}Proposition~\ref{prop_trace} implies that for all $1\leqslant r\leqslant m$ we have 
\begin{eq}\label{eq_trL_trR}
\tr(\chi_r\chi_0)=   \sum_{i,j=1}^n x_{ri} M_{ijj}
\;\; \text{ and } \;\; 
\tr(\chi_0\chi_r)= \sum_{i,j=1}^n x_{ri} M_{jij}.
\end{eq}

For every $1\leqslant i,i'\leqslant n$ consider the following $n\times n$ matrices over $\FF$: 
$$\MdL{i}{i'}=\Big( \sum_{t=1}^nM_{i i' t} M_{tjl} \Big)_{1\leqslant l,j\leqslant n} \;\; \text{ and  }\;\;
\MdR{i}{i'} = \Big( \sum_{t=1}^nM_{i i' t} M_{jtl} \Big)_{1\leqslant l,j\leqslant n}.$$

%------------------------------------------------------ 
\begin{prop}[Proposition 3.3 of~\cite{Alvarez_Lopatin_2025}]\label{prop_trace_double}
Assume that $m\geqslant1$ and a homogeneous monomial $h\in \FF\LA \chi_0,\ldots,\chi_m\RA$ of degree 1 in $\chi_{0}$ satisfies the equality $h= P^1_{h_1} \circ \cdots \circ P^k_{h_k} (\chi_0)$ for some symbols $P^1,\ldots,P^k\in\{L,R\}$ and monomials $h_1,\ldots,h_k\in \FF\LA \chi_1,\ldots,\chi_m\RA$ of degree 1 or 2, where $k>0$. For every $1\leqslant q\leqslant k$ denote 
\begin{enumerate}
\item[$\bullet$] $\un{i}_q=i_q$ and $w(q,\un{i}_q) = x_{r_q,i_q}$, in case $h_q=\chi_{r_q}$ for some 
$1\leqslant r_q\leqslant m$;

\item[$\bullet$] $\un{i}_q=\{i_q,i'_q\}$, $M_{P^q}^{(\un{i}_q)}=M_{P^q}^{(i_q,i'_q)}$ and $w(q,\un{i}_q) = x_{r_q,i_q}x_{r'_q,i'_q}$, in case $h_q=\chi_{r_q}\chi_{r'_q}$ for some 
$1\leqslant r_q,r'_q\leqslant m$.
\end{enumerate}

Then 

$$\tr(h)=\sum_{1\leqslant \un{i}_1,\ldots,\un{i}_k\leqslant n} \tr\!\Big(M_{P^1}^{(\un{i}_1)} \cdots M_{P^k}^{(\un{i}_k)}\Big) w(1,\un{i}_1) \cdots w(k,\un{i}_k),$$
\noindent{}where the condition $1\leqslant\{i_q,i'_q\}\leqslant n$ stands for the condition $1\leqslant i_q,i'_q \leqslant n$.
\end{prop}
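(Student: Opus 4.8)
The plan is to mirror the proof of Proposition~\ref{prop_trace}, the only novelty being the treatment of the factors $h_q$ of degree $2$. Throughout I would identify a linear operator $\algA\to\algA$ with its matrix in the fixed basis $\{e_1,\ldots,e_n\}$, and I would suppress the argument $\un{a}$, writing $x_{ri}$ for the scalar $(a_r)_i$. Since $h$ is homogeneous of degree $1$ in $\chi_0$, the operator $h(\,\cdot\,,\un{a})$ obtained by substituting $\chi_0\to b$, $\chi_r\to a_r$ is exactly the composition $P^1_{h_1(\un{a})}\circ\cdots\circ P^k_{h_k(\un{a})}$, so it suffices to compute the trace of this composition and express it through the coordinate functions $x_{ri}$.

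First I would expand each $a_r=\sum_{i=1}^n x_{ri}e_i$ and rewrite each factor $P^q_{h_q(\un{a})}$ as a linear combination of fixed matrices. For a degree-$1$ factor $h_q=\chi_{r_q}$, bilinearity of multiplication gives $P^q_{h_q(\un{a})}=\sum_{i_q}x_{r_q,i_q}P^q_{e_{i_q}}$, and the matrix of $P^q_{e_{i_q}}$ is $M_{P^q}^{(i_q)}$ by the very definition of $M_{\rm L}^{(i)}$ and $M_{\rm R}^{(i)}$. For a degree-$2$ factor $h_q=\chi_{r_q}\chi_{r'_q}$, the same expansion gives $h_q(\un{a})=\sum_{i_q,i'_q}x_{r_q,i_q}x_{r'_q,i'_q}\,e_{i_q}e_{i'_q}$, whence $P^q_{h_q(\un{a})}=\sum_{i_q,i'_q}x_{r_q,i_q}x_{r'_q,i'_q}P^q_{e_{i_q}e_{i'_q}}$. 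Here I would carry out the one new computation: using $e_{i_q}e_{i'_q}=\sum_t M_{i_q i'_q t}e_t$ and $e_te_j=\sum_l M_{tjl}e_l$, a direct calculation shows that the matrix of $L_{e_{i_q}e_{i'_q}}$ has $(l,j)$-entry $\sum_t M_{i_q i'_q t}M_{tjl}$, i.e.\ it equals $M_{\rm L}^{(i_q,i'_q)}$, and symmetrically the matrix of $R_{e_{i_q}e_{i'_q}}$ equals $M_{\rm R}^{(i_q,i'_q)}$. In both cases this reads uniformly as $P^q_{h_q(\un{a})}=\sum_{\un{i}_q}w(q,\un{i}_q)\,M_{P^q}^{(\un{i}_q)}$.

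Finally, since the matrix of a composition of operators is the product of the matrices taken in the same order, I would substitute these expansions into $h(\,\cdot\,,\un{a})=P^1_{h_1(\un{a})}\circ\cdots\circ P^k_{h_k(\un{a})}$, use multilinearity to collect the scalar coefficients, and then take the trace, pulling the products $w(q,\un{i}_q)$ outside by linearity; this yields the asserted formula. I expect the main point requiring care to be the degree-$2$ computation identifying the matrices $M_{\rm L}^{(i,i')}$ and $M_{\rm R}^{(i,i')}$ --- in a non-associative algebra $L_{ab}$ is genuinely different from $L_a\circ L_b$, so these operators cannot be reduced to the degree-$1$ case and must be computed directly from the structure constants --- together with the bookkeeping needed to keep the matrix product in the order matching the composition.
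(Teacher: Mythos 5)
Your proof is correct: the paper itself gives no proof of this proposition (it is imported as Proposition 3.3 of the cited reference), and your argument --- expanding each $P^q_{h_q(\un{a})}$ linearly in the coordinates, verifying directly from the structure constants that the matrices of $L_{e_{i}e_{i'}}$ and $R_{e_{i}e_{i'}}$ are exactly $M_{\rm L}^{(i,i')}$ and $M_{\rm R}^{(i,i')}$, and then taking the trace of the product of matrices in the order of the composition --- is precisely the natural extension of the proof of Proposition~\ref{prop_trace}. You also correctly isolate the one genuine subtlety, namely that in a non-associative algebra $L_{ab}$ cannot be reduced to $L_a\circ L_b$ and must be computed from the multiplication table.
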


%---------------------------------------------------------------
\subsection{Reduction to multilinear case}

In this section we assume that $G\leqslant  \GL_n$ and  $\FF=\CC$ is the field of complex numbers.

A vector $\un{r}=(r_1,\ldots,r_m)\in\NN^m$ satisfying $r_1+\cdots +r_m=t$ is called a {\it partition} of $t\in \NN$ in $m$ parts. Denote by $\calcP^{t}_m$ the set of all such partitions. Given a partition $\un{r} \in \calcP^{t}_m$, we define a function $\un{r}|\cdot|:\{1,\ldots,t\}\to \{1,\ldots,m\}$ by
$$\un{r}|l|=j \text{ if and only if } 
\sum_{1\leqslant i\leqslant j-1} r_i + 1\leqslant l \leqslant \sum_{1\leqslant i\leqslant j} r_i,$$
for all $1\leqslant l\leqslant t$. For a partition $\un{r} \in\calcP^{t}_m$ we also define a homomorphism of $\FF$-algebras as follows:
\begin{center}
    $\pi_{\un{r}}: \FF[\algA^t] \to \FF[\algA^m]$  by $x_{l,i}\to x_{\un{r}|l|,i}$ for all  $1\leqslant l\leqslant t$ and $1\leqslant i\leqslant n$.

\end{center}
The following proposition and lemma are well known and can easily be proven.

%------------------------------------------------------  
\begin{prop}[cf.~II.5, Theorem 2.5A of~\cite{Weyl_book}]\label{prop_char0}
The algebra of invariants $\FF[\algA^m]^G$ is generated by 
$$\{\pi_{\un{r}}(f)\,|\, f\in \FF[\algA^{t}]^G \text{ is multilinear},\; \un{r}\in \calcP^{t}_m,\; t\geqslant m\}.$$
\end{prop}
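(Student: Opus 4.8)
The plan is to prove this by the classical polarization--restitution argument, which is the substance of the cited theorem of Weyl; the only essential input is that $\Char\FF=0$, so that a certain product of factorials is invertible. Since $\FF=\CC$ is infinite, the algebra of invariants $\FF[\algA^m]^G$ inherits the $\NN^m$-grading by multidegree, and hence it suffices to show that every $\NN^m$-homogeneous invariant $f$ lies in the subalgebra generated by the stated set (constants are trivially present). So I fix such an $f$ of multidegree $\un{d}=(d_1,\ldots,d_m)$ and write $t_0=d_1+\cdots+d_m$ for its total degree, assuming $t_0>0$.

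First I would set up the polarization. Splitting the $r$-th argument into $d_r$ fresh copies, let $B_1,\ldots,B_m$ be the consecutive blocks $B_r=\{\sum_{i<r}d_i+1,\ldots,\sum_{i\le r}d_i\}$ that partition $\{1,\ldots,t_0\}$, and consider the linear map $\phi\colon\algA^{t_0}\to\algA^m$ sending $(b_1,\ldots,b_{t_0})$ to the tuple whose $r$-th entry is $\sum_{j\in B_r}b_j$. Because $G$ acts diagonally, $\phi$ is $G$-equivariant, so its pullback $\phi^{*}\colon\FF[\algA^m]\to\FF[\algA^{t_0}]$, $\phi^{*}f=f\circ\phi$, carries invariants to invariants. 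The $G$-action preserves the $\NN^{t_0}$-grading, so every multihomogeneous component of the invariant $\phi^{*}f$ is again $G$-invariant; I let $g$ be its component of multidegree $1^{t_0}$, which is then a multilinear element of $\FF[\algA^{t_0}]^G$.

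Next comes the restitution step. Expanding $\phi^{*}f$ and tracking the multilinear component, a direct computation (transparent already on the model $f=\prod_r x_{r,1}^{d_r}$, where the coefficient of the squarefree monomial in $(\sum_{j\in B_r}b_j)^{d_r}$ is $d_r!$) shows that applying the collapsing homomorphism $\pi_{\un{d}}$, with $\un{d}\in\calcP^{t_0}_m$, recovers $f$ up to the scalar $\prod_{r=1}^{m}d_r!$. Since $\Char\FF=0$ this scalar is nonzero, so $f=\big(\prod_{r}d_r!\big)^{-1}\pi_{\un{d}}(g)$ exhibits $f$ as $\pi_{\un r}$ of a multilinear invariant. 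The only bookkeeping point is the required bound $t\ge m$: when $t_0\ge m$ I am done with $t=t_0$; when $t_0<m$ I would instead regard $g$ as an element of $\FF[\algA^{t}]^G$ with $t=m$ that is independent of the last $t-t_0$ copies (it is still multilinear, of multidegree $(1^{t_0},0^{\,t-t_0})$), and replace $\un{d}$ by $\un r=(d_1,\ldots,d_{m-1},d_m+(t-t_0))\in\calcP^{t}_m$, which collapses the genuine variables exactly as $\pi_{\un{d}}$ does while absorbing the dummy copies harmlessly, so that $\pi_{\un r}(g)=\big(\prod_r d_r!\big)f$ again.

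The routine parts are the equivariance of $\phi$ and the expansion producing the factorial coefficient. The only hypothesis truly in play is characteristic zero, guaranteeing that the restitution scalar $\prod_r d_r!$ does not vanish; without it the argument collapses. I therefore expect the main obstacle to be merely organizational, namely making the block-to-index correspondence between the polarization and the map $\pi_{\un r}$ precise enough that the identity $\pi_{\un r}(g)=\big(\prod_r d_r!\big)f$ is manifest, together with verifying the $t\ge m$ normalization in the low-degree edge cases.
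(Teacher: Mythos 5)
Your argument is correct: the polarization map $\phi$ is $G$-equivariant, the multilinear component of $\phi^{*}f$ is again invariant since the action preserves the multigrading, and restitution via $\pi_{\un{r}}$ returns $\prod_r d_r!\cdot f$, which is invertible in characteristic zero; the padding device for $t_0<m$ also works because multidegree $(1^{t_0},0^{t-t_0})$ counts as multilinear under the paper's definition. The paper gives no proof of this proposition, stating it as well known with a reference to Weyl, and your polarization--restitution argument is precisely the standard proof that citation points to.
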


%------------------------------------------------------  
\begin{lemma}[cf.~Theorem 5.14 of~\cite{Bruns_Gubeladze_book_2009}]\label{lemma_diag}
If the group $G$ is diagonal, i.e., all elements of $G\leqslant \GL_n$ are diagonal matrices, then the algebra of invariants $\FF[\algA^m]^G$ is generated by some monomials from $\FF[\algA^m]$.
\end{lemma}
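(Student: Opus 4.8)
The plan is to exploit that a diagonal group scales each coordinate function, so that monomials are simultaneous eigenvectors (semi-invariants), and then to show that an invariant can only involve monomials whose associated character is trivial on $G$.

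First I would record the $G$-action on the variables. For $g\in G$ write $[g]=\diag{\la_1(g),\ldots,\la_n(g)}$. A direct computation from the definition $(g\cdot f)(\un{a})=f(g^{-1}\un{a})$ of the $G$-action on $\FF[\algA^m]$ (cf.\ \Ref{eq_action}) gives $g\cdot x_{ri}=\la_i(g)^{-1}\,x_{ri}$ for all $1\leqslant r\leqslant m$ and $1\leqslant i\leqslant n$. Hence every variable scales under $G$, and an arbitrary monomial $w=\prod_{r,i}x_{ri}^{d_{ri}}$ is a semi-invariant: $g\cdot w=\chi_w(g)\,w$, where $\chi_w\colon G\to\FF^{\times}$ is the character $\chi_w(g)=\prod_{i=1}^n\la_i(g)^{-e_i}$ with $e_i=\sum_{r=1}^m d_{ri}$. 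In particular $w\in\FF[\algA^m]^G$ exactly when $\chi_w$ is trivial on $G$, and the invariant monomials are closed under multiplication and contain $1$.

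Next I would reduce an arbitrary invariant to such monomials. Writing $f=\sum_w c_w w\in\FF[\algA^m]^G$, the condition $g\cdot f=f$ reads $\sum_w c_w(\chi_w(g)-1)\,w=0$ for every $g\in G$; since distinct monomials are linearly independent over $\FF$, this forces $c_w(\chi_w(g)-1)=0$ for all $w$ and all $g$, so $c_w\neq0$ only when $\chi_w$ is trivial on $G$, i.e.\ only when $w$ is invariant. Thus the invariant monomials span $\FF[\algA^m]^G$ as a vector space, and being multiplicatively closed they generate it as an algebra. This already proves the lemma as stated.

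Finally, to obtain a finite monomial generating set, I would pass to exponent vectors. The invariant monomials correspond to the set $S=\NN^{mn}\cap L$ of their exponent vectors, where $L\subseteq\ZZ^{mn}$ is the subgroup cut out by the conditions $\prod_{i=1}^n\la_i(g)^{\sum_{r}d_{ri}}=1$ for all $g\in G$; equivalently, $S$ is the set of nonnegative integer solutions of a system of integral linear equations and congruences. Such a set is a finitely generated affine monoid by the cited Theorem~5.14 of~\cite{Bruns_Gubeladze_book_2009}, and any finite generating set of the monoid $S$ yields finitely many monomials generating $\FF[\algA^m]^G$ as an algebra. The only non-elementary ingredient is this finite-generation statement for $S$; the reduction to invariant monomials is an elementary weight-space argument resting solely on the linear independence of monomials, so for the lemma as phrased there is essentially no obstacle.
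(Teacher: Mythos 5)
Your argument is correct and is exactly the standard weight-space/character argument that the paper relies on by citing this lemma as well known (the paper itself gives no proof): monomials are simultaneous semi-invariants for a diagonal group, linear independence of monomials forces every invariant to be a linear combination of invariant monomials, and these are multiplicatively closed. The final paragraph on finite generation of the exponent monoid is not needed for the lemma as stated (which only asserts generation by \emph{some} monomials), but it is a correct strengthening consistent with the cited Theorem~5.14 of Bruns--Gubeladze.
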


%=======================================================
%\subsection{Some two-dimensional invariants}

 The following lemma follows immediately from a partial case of Theorem 7.6 from~\cite{Alvarez_Lopatin_2025} for the algebra ${\mathbf A}_2$.

%------------------------------------------------------ 
\begin{lemma}[see Theorem 7.6 of~\cite{Alvarez_Lopatin_2025}]\label{lemma_dim2}
Assume that $\algV$ is a two-dimensional vector space with a basis $\{e_1,e_2\}$ and let $G\leqslant \GL_2$ be the group of all matrices
$$\matr{1}{\al}{0}{1},$$
where $\al\in\FF$. As in Section~\ref{section_alg_inv}, the coordinate ring  $\FF[\algV^m]=\FF[x_{r1},x_{r2}\,|\,1\leqslant r\leqslant m]$ of $\algV^m$ is a $G$-module. Then the algebra of invariants $\FF[\algV^m]^G$ is generated by 
$$1, x_{12},\ldots,x_{m2},\; x_{r1}x_{s2} - x_{r2}x_{s1} \;\; (1\leqslant r<s\leqslant m).$$
\end{lemma}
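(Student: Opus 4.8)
Write $B$ for the subalgebra generated by $1$, the $x_{r2}$, and the elements $p_{rs}:=x_{r1}x_{s2}-x_{r2}x_{s1}$ ($1\le r<s\le m$). The easy half is to check $B\subseteq\FF[\algV^m]^G$. By~\eqref{eq_action}, with $[g]=\matr{1}{\al}{0}{1}$ we have $[g]^{-1}=\matr{1}{-\al}{0}{1}$, so $G$ acts on the coordinate ring by $g\cdot x_{r1}=x_{r1}-\al x_{r2}$ and $g\cdot x_{r2}=x_{r2}$; thus $G\cong(\FF,+)$ acts by the shear parametrized by $\al$. Each $x_{r2}$ is fixed, and a one-line substitution shows $g\cdot p_{rs}=p_{rs}$, the $\al$-terms cancelling. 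The whole content of the lemma is the reverse inclusion $\FF[\algV^m]^G\subseteq B$.

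The plan is to trivialize the action on the locus where it is free by means of a slice. Localize at $x_{12}$ and set $t=x_{11}/x_{12}$; then $g\cdot t=t-\al$, so $G$ acts on $t$ by pure translation, while $x_{12},\dots,x_{m2}$ and $p_{1r}$ ($2\le r\le m$) are invariant. These invariants together with $t$ generate $\FF[\algV^m][x_{12}^{-1}]$, exhibiting it as the polynomial ring in the single variable $t$ over the subring $B_{x_{12}}$. Since $\FF$ is infinite, a polynomial $\sum_j c_j t^j$ with $c_j\in B_{x_{12}}$ satisfies $\sum_j c_j(t-\al)^j=\sum_j c_j t^j$ for all $\al$ only if $c_j=0$ for $j\ge1$; hence $\big(\FF[\algV^m]^G\big)_{x_{12}}=B_{x_{12}}$, and by the symmetry $1\leftrightarrow r$ likewise $\big(\FF[\algV^m]^G\big)_{x_{r2}}=B_{x_{r2}}$ for every $r$.

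It remains to descend from these local descriptions to a global one, and this is the main obstacle, since the unipotent group $G$ admits no Reynolds operator to average with. Given $f\in\FF[\algV^m]^G$, the previous step yields $f\in B_{x_{r2}}\subseteq\mathrm{Frac}(B)$ for every $r$; equivalently, $f$ is a regular function on $\mathrm{Spec}(B)$ away from the common zero locus $\{x_{12}=\cdots=x_{m2}=0\}$. All the $p_{rs}$ vanish there as well, so this locus is just the origin, which has codimension $\ge2$ for $m\ge2$. The plan is to invoke a Hartogs/Serre-$S_2$ extension across this codimension-$\ge2$ set to conclude $f\in B$; this requires knowing that $B$ is normal, which can be read off from its defining relations — the Plücker relations together with the incidence relations $x_{r2}p_{sl}-x_{s2}p_{rl}+x_{l2}p_{rs}=0$ — exhibiting $B$ as a normal straightening-law algebra. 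The case $m=1$ is immediate: there are no $p_{rs}$, and invariance under all shears $x_{11}\mapsto x_{11}-\al x_{12}$ forces $f\in\FF[x_{12}]$. One may instead bypass the normality input by Proposition~\ref{prop_char0}, which reduces the statement to showing that every multilinear invariant of $\FF[\algV^t]^G$ is a linear combination of products of the $x_{r2}$ and the $p_{rs}$ — a finite-dimensional computation — after which the polarizations $\pi_{\un r}$ carry such products back into $B$. Either way, establishing this descent is the crux of the argument.
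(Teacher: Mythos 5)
First, a point of comparison: the paper does not prove this lemma at all --- it is imported as a special case of Theorem~7.6 of~\cite{Alvarez_Lopatin_2025}, so there is no in-paper argument to measure yours against. Judged on its own, your proposal gets the easy inclusion right, and the localization step is correct and nicely done: since $x_{12}$ is invariant, inverting it commutes with taking invariants, the slice coordinate $t=x_{11}/x_{12}$ satisfies $g\cdot t=t-\al$, and $\FF[\algV^m]_{x_{12}}=B_{x_{12}}[t]$ because $x_{r1}=t\,x_{r2}-p_{1r}/x_{12}$ and $p_{rs}=(x_{r2}p_{1s}-x_{s2}p_{1r})/x_{12}$; hence $(\FF[\algV^m]^G)_{x_{r2}}=B_{x_{r2}}$ for every $r$. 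The problem is that the proof stops exactly where you yourself locate the crux, and neither of the two completions you sketch is actually established.

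Route (a) contains a false intermediate claim: the set $V(x_{12},\ldots,x_{m2})$ inside $\mathrm{Spec}(B)$ is \emph{not} just the origin. What you computed is that all $p_{rs}$ vanish on the locus $\{x_{12}=\cdots=x_{m2}=0\}$ of $\algV^m$, i.e.\ on the \emph{image} of that locus; but $\mathrm{Spec}(B)$ contains limit points with all $x_{r2}=0$ and $p_{rs}$ arbitrary subject to the Pl\"ucker relations (already for $m=2$, where $B=\FF[x_{12},x_{22},p_{12}]$ is a polynomial ring, the locus is the whole $p_{12}$-axis). The codimension-$\geqslant 2$ conclusion happens to survive ($\dim B=2m-1$ versus $2m-3$ for the cone over the Grassmannian), but it needs its own argument. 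More seriously, the Hartogs/$S_2$ extension requires $B$ to be normal, and ``read off from its defining relations'' hides both a second fundamental theorem (that the Pl\"ucker and incidence relations generate the defining ideal of $B$) and the normality of the resulting algebra with straightening law --- results at least as heavy as the lemma itself; note that the standard argument that an invariant ring of a normal ring is normal does not apply to $B$, since $B$ is defined by generators and its equality with the invariant ring is what is being proven. Route (b) is a valid reduction via Proposition~\ref{prop_char0} (polarization does carry multilinear products of the $x_{r2}$ and $p_{rs}$ back into $B$), but the ``finite-dimensional computation'' that every multilinear invariant is a linear combination of such products is precisely the content of the theorem in its essential case, and it is not performed. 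As it stands the proposal is a sound skeleton with the decisive step missing.
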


%========================================================
%S4======================================================
\section{Automorphisms of $3$-dimensional  Leibniz algebras}\label{section_simple2dim}

We use notations for the tableau of multiplication of an algebra from Section~\ref{section_trace_alg}.

%-------------------------------------------------------
\begin{theo}\label{theo_Leib3} Each algebra $\algL\in{\bf L}_3$ is isomorphic to one and only one algebra from the following list, where algebras are given by their tableaux of multiplication with respect to some basis $\{e_1,e_2,e_3\}$:
\begin{longtable}{lcllcllcl}
$\algL_1$&$: $& $\matrThree{0}{0}{-2 e_1}{0}{e_1}{-e_2}{0}{e_2}{0},$ &
$\algL_2^{{\red \la\neq0}}$&$: $&$ \matrThree{0}{0}{\la e_1}{0}{0}{-e_2}{0}{e_2}{0},$ &
$\algL_3$&$: $&$ \matrThree{0}{0}{0}{0}{0}{-e_2}{0}{e_2}{e_1},$ \\
$\algL_4^{\la}$&$: $& $\matrThree{0}{0}{0}{0}{e_1}{e_1}{0}{0}{\la e_1},$&
$\algL_5$&$: $& $\matrThree{0}{0}{0}{0}{e_1}{0}{0}{0}{e_1},$ &
$\algL_6$&$: $&$ \matrThree{0}{0}{e_2}{0}{0}{e_1}{0}{0}{0},$ \\
$\algL_7^{\la}$&$:$ & $\matrThree{0}{0}{e_2}{0}{0}{\la e_1 + e_2}{0}{0}{0},$&
$\algL_8$&$:$ &$ \matrThree{0}{0}{e_2}{0}{0}{0}{0}{0}{e_1},$ &
$\algL_9$&$: $& $\matrThree{0}{0}{e_1 + e_2}{0}{0}{0}{0}{0}{e_1},$\\  
\red $\algL_{10}$ & $:$ & \red $\matrThree{0}{0}{e_1}{0}{0}{e_2}{0}{0}{0},$ &
\red $\algL_{11}$  & $: $& \red $\matrThree{0}{0}{0}{0}{0}{0}{0}{0}{e_1}.$ &\\ 
\end{longtable}
\noindent where $\la\in\FF$. 
\end{theo}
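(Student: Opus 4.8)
The plan is to produce a complete and non-redundant list of multiplication tables for the members of $\mathbf{L}_3$, which amounts to classifying all $3$-dimensional non-Lie Leibniz algebras over $\CC$ up to isomorphism. Since two published classifications already exist, I would organize the proof around three tasks: (i) establishing the list itself from structural constraints, (ii) verifying that distinct entries are pairwise non-isomorphic, and (iii) reconciling the list with the prior literature, in particular patching the gap in \cite{Rikhsiboev_Rakhimov_2012_AIP} and matching against \cite{Casas_Insua_Ladra_Ladra_2012_LAA}.

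For task (i), the key structural tool is the ideal $\Leib(\algL)$, the $\FF$-span of all squares $a^2$, which is nonzero precisely because $\algL$ is non-Lie. I would first record that $\Leib(\algL)$ is contained in the right annihilator and that $\algL/\Leib(\algL)$ is a Lie algebra, so the possible dimensions of $\Leib(\algL)$ (namely $1$ or $2$) sharply restrict the structure. Case-splitting on $\dim \Leib(\algL)$ and on the isomorphism type of the quotient Lie algebra (which in dimension $\leqslant 2$ is either abelian or the unique non-abelian $2$-dimensional Lie algebra) gives a finite tree of possibilities. Within each branch I would choose an adapted basis $\{e_1,e_2,e_3\}$ so that $\Leib(\algL)=\langle e_1\rangle$ (or $\langle e_1,e_2\rangle$), write the most general multiplication table compatible with the right-Leibniz identity $(xy)z=(xz)y+x(yz)$, and then use the residual basis-change freedom to normalize the structure constants into the canonical forms $\algL_1,\ldots,\algL_{11}$. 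This reduces to solving the quadratic Leibniz constraints on the structure constants and then running a normal-form computation under the stabilizer of the chosen flag.

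For task (ii), I would separate the families by computing a small set of isomorphism invariants: $\dim\Leib(\algL)$, the nilpotency class $\ncl(\algL)$ (distinguishing nilpotent entries such as $\algL_6,\algL_{10},\algL_{11}$ from the non-nilpotent ones), the isomorphism type of $\algL/\Leib(\algL)$, and the left/right operator traces $\tr_{\mathrm L}$ and $\tr_{\mathrm R}$ together with higher operator traces of degree $\leqslant 2$ as in \eqref{eq_trL_trR} and Proposition~\ref{prop_trace_double}. For the one-parameter families $\algL_2^{\la}$, $\algL_4^{\la}$, $\algL_7^{\la}$ the parameter $\la$ is a genuine modulus, so I would exhibit an invariant that is a function of $\la$ (for instance a ratio of eigenvalues of a multiplication operator) to show that different values of $\la$ give non-isomorphic algebras, while checking whether any exceptional $\la$ collapses into another family on the list.

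The main obstacle, and the part requiring the most care, is task (iii) combined with the completeness half of (i): I must verify that the normalization in each branch is exhaustive — that no isomorphism class is omitted — and precisely locate the missing case in \cite{Rikhsiboev_Rakhimov_2012_AIP}. I expect the gap lies in one of the degenerate branches (for example a boundary value of a parameter or a split between $\algL_{10}$ and $\algL_{11}$) where the earlier normal-form argument silently assumed a structure constant to be nonzero. Establishing equivalence with \cite{Casas_Insua_Ladra_Ladra_2012_LAA} then reduces to constructing an explicit dictionary between the two bases: for each of my entries I would produce a change of basis carrying it to the corresponding entry of their list, which is a finite but bookkeeping-heavy verification best presented as a table rather than written out inline.
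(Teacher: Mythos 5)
Your plan is mathematically sound but follows a genuinely different route from the paper. You propose a from-scratch classification: case-split on $\dim\Leib(\algL)\in\{1,2\}$ and on the Lie algebra $\algL/\Leib(\algL)$, write the most general Leibniz-compatible multiplication table in an adapted basis, and normalize under the stabilizer of the chosen flag. The paper instead takes the two published classifications as given and works by reconciliation: it identifies three concrete defects in the Rikhsiboev--Rakhimov list (the value $\la=0$ in $\algL_2^{\la}$ gives a Lie algebra; $\algL_{10}$ was wrongly merged with $\algL_6$; $\algL_{11}$ was omitted from one of their cases), and then proves that the corrected list is complete and irredundant by exhibiting an explicit bijective dictionary of base changes onto the computer-verified list of Casas, Insua, Ladra and Ladra --- the only entry requiring real work being $\algL_7^{\la}$ with $\la\notin\{0,-1/4\}$, which is matched to $\algL_{3a}^{\la'}$ via $\la'=\xi_2/\xi_1$ for the roots $\xi_1,\xi_2$ of $x^2+x-\la=0$, together with the uniqueness claim that $\la\mapsto\la'$ is a bijection onto the relevant parameter set. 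What your approach buys is self-containedness; what it costs is that the entire burden sits in the normal-form computation you only gesture at (``solving the quadratic Leibniz constraints \dots{} and running a normal-form computation''), which is precisely the step where the earlier hand classification went wrong, so a referee would demand it in full. The paper's approach outsources completeness to the computer-assisted classification and reduces the theorem to a finite, checkable table of isomorphisms, plus two short ad hoc separations ($\algL_{11}$ from $\algL_5$ via $\dim\Ann^{\rm R}$, and $\algL_{10}$ from the rest via the dictionary and the later separation corollary). Your separating-invariant step (ii) is consistent with what the paper computes in later sections, but it is not how non-redundancy is actually established in the proof of this theorem, and relying on $\dim\Aut$ or trace invariants there would risk circularity with results that are derived after the classification.
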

\begin{proof}
A classification of 3-dimensional non-Lie Leibniz algebras over complex numbers was obtained by Rikhsiboev, Rakhimov in Theorem~2 of~\cite{Rikhsiboev_Rakhimov_2012_AIP}, where it was claimed that any algebra from ${\bf L}_3$ belongs to the following list: $\algL_1$,  $\algL_2^{\la}$, $\algL_3$, $\algL_4^{\la}$, $\algL_5$, $\algL_6$, $\algL_7^{\la}$, $\algL_8$, $\algL_9$ with $\la\in\FF$. Unfortunately, this classification is not complete: 

\begin{enumerate}
\item[$\bullet$] The algebra $\algL_2^0$ is a Lie algebra, therefore, we should assume that $\la$ is non-zero in case $\algL_2^{\la}$. 

\item[$\bullet$] On page 361 of the proof of Theorem~2 of~\cite{Rikhsiboev_Rakhimov_2012_AIP} it was claimed that $\algL_{10}$ is isomorphic to $\algL_6$, which is not the case. Actually, as it follows from Corollary~\ref{cor_separ} (see below), $\algL_{10}$ is not isomorphic to any other algebra from the formulation of Theorem~\ref{theo_Leib3}.

\item[$\bullet$] The algebra $\algL_{11}$ was missed in Case 2.1 of the proof of Theorem 2 of~\cite{Rikhsiboev_Rakhimov_2012_AIP}. To show that $\algL_{11}$ is not isomorphic  to any other algebra from the formulation of Theorem~\ref{theo_Leib3}, we note that the only commutative algebras from the formulation of Theorem~\ref{theo_Leib3} are $\algL_{5}$ and $\algL_{11}$, but $\dim \Ann^{\rm R}(\algL_{5})=1$ and $\dim \Ann^{\rm R}(\algL_{11})=2$, where the right annihilator of an algebra $\algA$  is
$$\Ann^{\rm R}(\algA) =\{a\in \algA \,|\, ba=0 \text{ for all } b\in\algA\}.$$
\end{enumerate}

To complete the proof, we will prove that the classification from Theorem~\ref{theo_Leib3} is equivalent to the computer-based classification of 3-dimensional non-Lie Leibniz algebras over complex numbers by J.M. Casas, M.A. Insua, M. Ladra, S. Ladra~\cite{Casas_Insua_Ladra_Ladra_2012_LAA}.  For $\de\in\FF$, we denote by 
\begin{eq}\label{eq_casas}
\algL_{2a}^{\de},\; \algL_{2b},\; \algL_{2c},\; \algL_{2d},\; \algL_{2e}^{\de}\;  (\de\neq0),\; \algL_{2f},\; \algL_{2g},\; \algL_{3a}^{\de}\;  (\de\in\widetilde{\FF}),\; \algL_{3b},\; \algL_{3c},\;  \algL_{3d}
\end{eq}% 
the 3-dimensional Leibniz algebras from page 3752 of~\cite{Casas_Insua_Ladra_Ladra_2012_LAA}, where 
$$\widetilde{\FF}= \{\al\in\CC\,|\, |\al| > 1\} \cup \{\al\in\CC \,|\, |\al| = 1,\; 0\leqslant{\rm arg}(\al)\leqslant \pi\}.$$% 
\noindent{}Note that $\pm 1\in \widetilde{\FF}$ and for every $\al\in\FF\backslash \{0,\pm1\}$ either $\al$ or $\al^{-1}$ lies in $\widetilde{\FF}$.  In~\cite{Casas_Insua_Ladra_Ladra_2012_LAA} it was proven that any algebra from ${\bf L}_3$ is isomorphic to one and only one algebra from the list~(\ref{eq_casas}). The following isomorphisms complete the proof, where 
\begin{enumerate}
\item[$\bullet$] for each line, the algebra from the first column is isomorphic to the algebra from the second column; 

\item[$\bullet$] the tableau of multiplication of the algebra from the first column considered with respect to the ordered basis from the third column is equal to the tableau of multiplication of the algebra from the second column.
\end{enumerate}

\begin{longtable}{l|l|l}
\hline
$\algL_1$ & $\algL_{2g}$ & $\{e_1, e_1+e_2, -e_2-e_3\}$\\  

\hline   

$\algL_2^{\la\neq0}$ & $\algL_{2e}^{-\la}$ & $\{e_1, e_2, -e_3\}$\\  

\hline   

$\algL_{3}$ & $\algL_{2f}$ & $\{e_1, e_2, -e_3\}$\\  

\hline   

$\algL_{4}^{\la\neq0}$ & $\algL_{2a}^{\la}$ & $\{e_1, e_3, e_2\}$\\  

\hline   

$\algL_{4}^0$ & $\algL_{2a}^0$ & $\{e_1, e_3, e_2\}$\\  

\hline   

$\algL_{5}$ & $\algL_{2c}$ & $\{e_1, e_2, e_3\}$\\  

\hline   

$\algL_{6}$ & $\algL_{3a}^{-1}$ & $\{e_1+e_2, e_2-e_1, -e_3\}$\\  

\hline   

$\algL_{7}^{\la\neq -\frac{1}{4},0}$ & $\algL_{3a}^{\la'}$ & $\{\xi_1 e_1+e_2, \xi_2 e_1+e_2, \nu e_3\}$\\  

\hline   

$\algL_{7}^{-\frac{1}{4}}$ & $\algL_{3b}$ & $\{e_2, -\frac{e_1}{2}+e_2, 2e_3\}$\\  

\hline   

$\algL_{7}^0$ & $\algL_{2d}$ & $\{e_2, e_1-e_2, e_3\}$\\  

\hline   

$\algL_{8}$ & $\algL_{3c}$ & $\{e_1, e_2, e_3\}$\\  

\hline   

$\algL_{9}$ & $\algL_{3d}$ & $\{2e_1+e_2, 2e_1+2e_2, e_1+e_3\}$\\  

\hline   

$\algL_{10}$ & $\algL_{3a}^1$ & $\{e_1, e_2, e_3\}$\\  

\hline   

$\algL_{11}$ & $\algL_{2b}$ & $\{e_1, e_2, e_3\}$\\ 
\hline   
\end{longtable}%
\noindent{}Here, 
\begin{enumerate}
\item[$\bullet$] $\xi_1,\xi_2\in\FF$ are pairwise different roots of the equation $x^2+x-\la=0$, which are uniquely defined by the condition that $\la':=\frac{\xi_2}{\xi_1}$ belongs to $\widetilde{\FF}\backslash\{\pm1\}$; 

\item[$\bullet$] $\nu=\frac{2\xi_1+1}{\xi_1-2\la}$.
\end{enumerate}

It is straightforward to verify all cases but the case of $\algL=\algL_{7}^{\la}$ with $\la\in\FF\backslash\{-1/4,0\}$. Since the discriminant of the equation $x^2+x-\la=0$ is non-zero, then its roots $\xi_1,\xi_2$ are pairwise different. Note that 
\begin{eq}\label{eq_L7_0}
\xi_1 \xi_2 = -\la \;\text{ and }\; \xi_1+\xi_2=-1.
\end{eq}%

\noindent{}Since $\la\neq -1/4$, we have
\begin{eq}\label{eq_L7_1}
\xi_1\neq 2\la \;\text{ and }\; \xi_2\neq 2\la.
\end{eq}%

\noindent{}Since $\xi_1$ and $\xi_2$ are non-zero, $\la'$ is well-defined and $\la'\neq0$. Permuting $\xi_1$ and $\xi_2$, if necessary, we can assume that $\la'\in\widetilde{\FF}$. Obviously, if $\la'=-1$, then $\xi_1=\xi_2$; a contradiction. Similarly, if $\la'=1$, then $\la=-1/4$; a contradiction. Hence, $\la'\in\widetilde{\FF}\backslash\{\pm1\}$.

Formulas~(\ref{eq_L7_1}) imply that $\nu$ is well-defined and $\nu\neq0$. Hence,

$$v_1=\xi_1 e_1+e_2,\;\; v_2=\xi_2 e_1+e_2,\;\; v_3=\nu e_3$$%

\noindent{}is a basis for $\algL$. 

Compute the tableau of multiplication for $\algL$ with respect to the ordered basis $\{v_1,v_2,v_3\}$: 
\begin{enumerate}
\item[$\bullet$]
Using formulas~(\ref{eq_L7_0}) it is not difficult to see that $\nu\la=\la' \xi_1$ and $\nu(\xi_1+1)=\la'$. Hence, $$v_1v_3=\xi_1\nu e_2 + \nu (\la e_1+e_2) = \nu \la e_1 + \nu (\xi_1+1) e_2 = \la' \xi_1 e_1 + \la' e_2 = \la' v_1. $$%

\item[$\bullet$] Using formulas~(\ref{eq_L7_0}) it is not difficult to see that $\nu\la= \xi_2$ and $\nu(\xi_2+1)=1$. Hence, 
$$v_2v_3=\xi_2\nu e_2 + \nu (\la e_1+e_2) = \nu \la e_1 + \nu (\xi_2+1) e_2 =  \xi_2 e_1 + e_2 =  v_2. $$%

\item[$\bullet$] others products $v_i v_j$ are zero.
\end{enumerate}
Therefore, the tableau of multiplication for $\algL$ with respect to the ordered basis $\{v_1,v_2,v_3\}$ is
$$\matrThree{0}{0}{\la' v_1}{0}{0}{v_2}{0}{0}{0}.$$
Hence, $\algL\simeq \algL_{3a}^{\la'}$. The following claim, which can be proven by straightforward computations, completes the proof:

\begin{longtable}{l}
{\it for every $\la'\in\widetilde{\FF}\backslash\{\pm1\}$  there exists a unique  $\la\in\FF\backslash\{0,-1/4\}$ with $\la'=\frac{\xi_2}{\xi_1}$,} \\
{\it where $\xi_1,\xi_2\in\FF$ are pairwise different roots of the equation $x^2+x-\la=0$.}\\
\end{longtable}

\end{proof}

%-------------------------------------------------------
\begin{theo}\label{theo_aut}
The groups of automorphisms of algebras from ${\bf L}_3$ are the following set of matrices:
\begin{longtable}{lll}
$\algL_1$&$: $& $g=\matrThree{\al_5^2}{\al_5 \al_6}{\al_6^2/2}{0}{\al_5}{\al_6}{0}{0}{1} \text { for }\al_5\neq 0;$\\

$\algL_2^{{\red \la\neq 0}}$&$: $& $g=\matrThree{\al_1}{0}{0}{0}{\al_5}{\al_6}{0}{0}{1}  \text{ for } \al_1,\al_5\neq 0;$\\ 

%$\algL_2^{ 0}$&$: $&$ g=\matrThree{\al_1}{0}{\al_3}{0}{\al_5}{\al_6}{0}{0}{1}  \text{ for } \al_1,\al_5\neq 0;$\\

$\algL_3$&$:$ &$ g=\matrThree{1}{0}{\al_3}{0}{\al_5}{\al_6}{0}{0}{1}  \text{ for } \al_5\neq0;$\\
$\algL_4^{\la\neq0}$&$: $& $g_1=\matrThree{\frac{\al_6^2 + \al_6 \al_9 + \la \al_9^2}{\la}}{\al_2}{\al_3}{0}{\frac{\al_6}{\la} + \al_9}{\al_6}{0}{-\frac{\al_6}{\la}}{\al_9}   \text{ for } \al_6\neq 0,\; \det(g_1)\neq0,$\\ 
 &&$ g_2=\matrThree{\al_5^2}{\al_2}{\al_3}{0}{\al_5}{0}{0}{0}{\al_5}   \text{ for } \al_5\neq 0 ;$\\
$\algL_4^{0}$&$: $& $g=\matrThree{\al_5 (\al_5 + \al_8)}{\al_2}{\al_3}{0}{\al_5}{0}{0}{\al_8}{\al_5 + \al_8}  \text{ for } \al_5+\al_8\neq0;$\\
$\algL_5$&$: $& $g_1=\matrThree{\al_5^2 + \al_6^2}{\al_2}{\al_3}{0}{\al_5}{\al_6}{0}{-\al_6}{\al_5}  \text{ for } a_5\neq0,\;\al_5^2+\al_6^2 \neq 0,$\\ 
 &&$  g_2=\matrThree{\al_5^2 + \al_6^2}{\al_2}{\al_3}{0}{\al_5}{\al_6}{0}{\al_6}{-\al_5}  \text{ for }a_5\neq0,\;\al_5^2+\al_6^2 \neq 0,$ \\
 &&  $g_3=\matrThree{\al_6^2}{\al_2}{\al_3}{0}{0}{\al_6}{0}{\pm\al_6}{0}   \text{ for } \al_6 \neq 0;$ \\
$\algL_6$&$: $& $g_1=\matrThree{\al_1}{\al_4}{0}{\al_4}{\al_1}{0}{0}{0}{1}  \text{ for }\al_1^2 \neq \al_4^2;$\\ 
 &&$ g_2=\matrThree{\al_1}{-\al_4}{0}{\al_4}{-\al_1}{0}{0}{0}{-1}   \text{ for } \al_1^2 \neq \al_4^2;$\\
$\algL_7^{\la\neq0}$&$: $&$ g=\matrThree{\al_1}{\la \al_4}{0}{\al_4}{\al_1 + \al_4}{0}{0}{0}{1}   \text{ for } \al_1^2+\al_1 \al_4-\la \al_4^2\neq 0   ;$\\  
$\algL_7^{0}$&$: $&$ g=\matrThree{\al_1}{0}{\al_3}{\al_4}{\al_1 + \al_4}{-\al_3}{0}{0}{1}   \text{ for } \al_1 (\al_1+\al_4)\neq 0;$\\ 
$\algL_8$&$: $& $g=\matrThree{\al_9^2}{0}{\al_3}{\al_3 \al_9}{\al_9^3}{\al_6}{0}{0}{\al_9}  \text{ for } \al_9\neq 0;$\\ 
$\algL_9$&$: $& $g=\matrThree{1 + \al_3}{0}{\al_3}{\al_3}{1}{\al_6}{0}{0}{1}   \text{ for }\al_3\neq-1;$    \\
\red $\algL_{10}$&$: $& \red $g=\matrThree{\al_1}{\al_2}{0}{\al_4}{\al_5}{0}{0}{0}{1}   \text{ for }\al_1\al_5-\al_2\al_4\neq0;$    \\
\red $\algL_{11}$&$: $& \red $g=\matrThree{\al_9^2}{\al_2}{\al_3}{0}{\al_5}{\al_6}{0}{0}{\al_9}   \text{ for }\al_5\al_9\neq0.$    
\end{longtable}
\noindent  Here $\al_1,\ldots,\al_9\in\FF$.
\end{theo}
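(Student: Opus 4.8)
The plan is to solve, for each of the thirteen algebras in the list of Theorem~\ref{theo_Leib3}, the defining equations of an automorphism directly. Writing an arbitrary linear endomorphism of $\algL$ as a matrix $(\al_{ij})_{1\leqslant i,j\leqslant 3}$ with respect to $\{e_1,e_2,e_3\}$ --- the nine entries being renamed $\al_1,\ldots,\al_9$ row by row as in the statement, so that $\al_{ij}$ is the $(i,j)$-entry and $g(e_j)=\sum_i \al_{ij}e_i$ --- the map $g$ lies in $\Aut(\algL)$ if and only if it is invertible and $g(e_ie_j)=g(e_i)\,g(e_j)$ for all $1\leqslant i,j\leqslant3$. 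Using the notation $e_ie_j=\sum_l M_{ijl}e_l$ from Section~\ref{section_trace_alg}, this is equivalent to the polynomial system
$$\sum_{l=1}^3 M_{ijl}\,\al_{sl} \;=\; \sum_{k,p=1}^3 \al_{ki}\,\al_{pj}\,M_{kps}, \qquad 1\leqslant i,j,s\leqslant3,$$
in the nine unknowns $\al_1,\ldots,\al_9$, subject to $\det g\neq0$. For each algebra I would substitute its structure constants $M_{ijl}$ and determine the complete solution set of this system.

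To keep the computations short, I would first exploit the fact that every automorphism preserves each characteristic subspace of $\algL$: the ideal $\Leib(\algL)$ (which by Section~\ref{section_Leibniz} is the span of all squares $a^2$), the left and right annihilators $\Ann^{\rm L}(\algL)$ and $\Ann^{\rm R}(\algL)$, the center, and the derived subalgebra $\algL^2$. For each algebra these distinguished subspaces are immediate to compute and are typically spanned by coordinate vectors, so demanding that $g$ map each of them into itself forces several entries of $(\al_{ij})$ to vanish and imposes a block or triangular shape on $g$. For instance, for $\algL_1$ one computes $\Leib(\algL_1)=\langle e_1\rangle$ and $\algL_1^2=\langle e_1,e_2\rangle$; requiring $g$ to preserve both forces $\al_{21}=\al_{31}=\al_{32}=0$, i.e.\ $g$ is upper triangular, after which only a few genuinely nonlinear equations remain.

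With the shape of $g$ fixed, I would then solve the surviving equations branch by branch. Since the system is polynomial, its solution set may split into several irreducible components; these correspond exactly to the several matrix families $g_1,g_2,\ldots$ listed for $\algL_4^{\la}$, $\algL_5$, and $\algL_6$, so one must enumerate all branches (typically by case-splitting on whether a given entry is zero) and, for each, read off the free parameters and the relations among the remaining entries. The final step in every case is to impose $\det g\neq0$, which yields precisely the invertibility conditions stated (such as $\al_5\neq0$ for $\algL_1$, coming from $\det g=\al_5^3$, or $\al_1\al_5-\al_2\al_4\neq0$ for $\algL_{10}$), and conversely to verify that every matrix of the listed form does satisfy the full automorphism system --- this sufficiency check is what guarantees the list is exact and not merely an upper bound.

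The computations are elementary but voluminous, so the main obstacle is organizational rather than conceptual. The delicate cases are those with the richest automorphism groups and the most nonzero structure constants --- notably $\algL_4^{\la}$ and $\algL_5$ --- where the solution variety has more than one component and one must argue carefully that the case analysis is exhaustive (no branch overlooked) and that each branch is parametrized without redundancy. A secondary subtlety concerns the parametric families $\algL_2^{\la}$, $\algL_4^{\la}$, and $\algL_7^{\la}$, where the structure of the solution set can jump at special values of $\la$; these values must be isolated and treated separately, exactly as the statement already separates $\algL_4^{\la\neq0}$ from $\algL_4^{0}$ and $\algL_7^{\la\neq0}$ from $\algL_7^{0}$.
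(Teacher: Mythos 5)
Your proposal is correct and follows essentially the same route as the paper: the paper likewise writes $g$ as a general matrix $(\al_1,\ldots,\al_9)$ and solves $g(e_ie_j)-g(e_i)g(e_j)=0$ together with $\det g\neq0$ case by case, splitting on the vanishing of particular entries exactly where you anticipate (e.g.\ $\al_6=0$ vs.\ $\al_6\neq0$ for $\algL_4^{\la\neq0}$, $\al_9=0$ vs.\ $\al_9\neq0$ for $\algL_5$), and checking sufficiency by direct substitution. Your preliminary reduction via the invariant subspaces $\Leib(\algL)$, the annihilators and $\algL^2$ is a harmless organizational shortcut the paper achieves instead by choosing a convenient order of the equations $g_{ij}=0$; the only thing separating your plan from a full proof is actually carrying out the thirteen elementary computations.
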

\begin{proof}
Given $\algL\in{\bf L}_3$, consider an invertible linear map $g:\algL\to\algL$, given by a matrix
$$g=\matrThree{\al_1}{\al_2}{\al_3}{\al_4}{\al_5}{\al_6}{\al_7}{\al_8}{\al_9}$$
with respect to the basis $\{e_1,e_2,e_3\}$ of $\algL$ from Theorem~\ref{theo_Leib3}, where $\al_1,\ldots,\al_9\in\FF$. Then $g$ is an automorphism of $\algL$ if and only if for each $1\leqslant i,j\leqslant 3$ we have $g_{ij}:=g(e_ie_j)-g(e_i)g(e_j) = 0$. It is straightforward to verify that matrices $g,g_i$ from the formulation of this theorem are automorphisms. So, we assume that $g$ is an automorphism.

\medskip
\begin{enumerate}[{\bf 1.}]

\item Assume $\algL=\algL_1$. Consequently considering $g_{13}=0$, $g_{11}=0$, $g_{23}=0$ we obtain $\al_7 = 0$, $\al_4 = 0$, $\al_8 = 0$, respectively. 
Since $\det(g)=\al_1 \al_5 \al_9$, we have $\al_1\neq0$ and consequently considering $g_{13}=0$, $g_{22}=0$, $g_{23}=0$, $g_{33}=0$ we obtain $\al_9 = 1$, $\al_1 = \al_5^2$, $\al_2 = \al_5 \al_6$, $\al_3 = \al_6^2/2$, respectively. The required statement is proven.

\item Assume $\algL=\algL_2^{\la}$ for some {\red non-zero} $\la\in\FF$.  Consequently considering $g_{13}=0$, $g_{23}=0$, $g_{32}=0$ we obtain $\al_7 = 0$,  $\al_8 = 0$, $\al_2 = 0$, respectively. Since $\det(g)=\al_1 \al_5 \al_9\neq0$, we consequently consider $g_{31}=0$, $g_{33}=0$, $g_{13}=0$ to obtain $\al_4 = 0$, $\al_3 = 0$, $\al_9 = 1$, respectively.

%Let $\la=0$. Since $g_{32}=0$, we have  $\al_2 = 0$ and $\al_8 = 0$. Since $\det(g)=-\al_3 \al_5 \al_7+\al_1 \al_5 \al_9$, we have $\al_5\neq0$. Since $g_{12}=0$, we have  $\al_7 = 0$. Thus $\det(g)=\al_1 \al_5 \al_9\neq0$ and we consequently consider  $g_{13}=0$, $g_{23}=0$ to obtain $\al_4 = 0$, $\al_9 = 1$, respectively. The required statement is proven.

\item Assume $\algL=\algL_3$. Consequently considering $g_{11}=0$, $g_{22}=0$, $g_{23}=0$,   we obtain $\al_7 = 0$, $\al_8 = 0$, $\al_2 = 0$, respectively. Since $\det(g)=\al_1 \al_5 \al_9\neq0$, we consequently consider $g_{13}=0$, $g_{23}=0$, $g_{33}=0$ to obtain $\al_4 = 0$, $\al_9 = 1$, $\al_1 = 1$, respectively. The required statement is proven.

\item Assume $\algL=\algL_4^{\la}$ for some $\la\in\FF$. Since $g_{22}=0$, we have  $\al_4 = 0$ and $\al_7 = 0$. Considering equality  $g_{22}=0$ once again, we can see that $\al_1 = \al_5^2 + \al_5 \al_8 + \la \al_8^2$.

\begin{enumerate}
\item Let $\la\neq0$.

\begin{enumerate}
\item Assume $\al_6\neq 0$. Then equality $g_{32}=0$ implies that $\al_5 = -\al_8 (\al_6 + \la \al_9)/\al_6$. Since  $\det(g)\neq0$, we have $\al_6^2+\al_6 \al_9+\la \al_9^2\neq0$ and $\al_8\neq0$.  Thus $g_{23}=0$ implies $\al_8 = -\al_6 / \la$. 

\item Assume $\al_6= 0$.  Since  $\det(g)\neq0$, we have $\al_5\neq0$ and $\al_9\neq0$. Consequently considering $g_{32}=0$ and $g_{23}=0$ we obtain that $\al_8 = 0$ and $\al_9 = \al_5$, respectively. 

\end{enumerate}

\item
Let $\la=0$. Since $\det(g)\neq0$, we have $\al_5+\al_8\neq0$ and $\al_5\neq0$. Consequently considering $g_{32}=0$ and $g_{23}=0$ we obtain $\al_6 = 0$ and $\al_9 = \al_5 + \al_8$, respectively. The required statement is proven.
 
\end{enumerate}

\item Assume $\algL=\algL_5$. Equality $g_{22}=0$ implies $\al_4 = 0$, $\al_7 = 0$, and then equality $g_{33}=0$ implies $\al_1 = \al_6^2 + \al_9^2$.

\begin{enumerate}
\item Let $\al_9\neq 0$. Equality $g_{23}=0$ implies that $\al_8 = -\al_5 \al_6/\al_9$. Since $\det(g)\neq0$, we have $\al_6^2+\al_9^2 \neq0$ and $g_{22}=0$ implies that $\al_9 = \pm \al_5$. 
 
\item Let $\al_9= 0$. Since $\det(g)=-\al_6^3 \al_8\neq0$, consequently considering $g_{23}=0$, $g_{22}=0$ we obtain $\al_5 = 0$ and $\al_8 = \pm \al_6$, respectively. The required statement is proven.

\end{enumerate}

\item Assume $\algL=\algL_6$. Consequently considering $g_{13}=0$, $g_{23}=0$ we obtain that $\al_8 = 0$ and $\al_7 = 0$, respectively. Considering equality $g_{13}=0$ once again, we can see that $\al_5 = \al_1 \al_9$ and $\al_2 = \al_4 \al_9$. Since $\det(g)=(\al_1^2 - \al_4^2) \al_9^2\neq0$, equality $g_{33}=0$ implies $\al_3 = \al_6 = 0$. 

If $\al_9\neq\pm 1$, then $g_{23}=0$ implies $\al_1=\al_4=0$; hence, $\det(g)=0$; a 
contradiction. Therefore, $\al_9 = \pm 1$. The required statement is proven.

\item Assume $\algL=\algL_7^{\la}$ for some $\la\in\FF$.
Considering $g_{13}=0$ we obtain $\al_8 = 0$, $\al_2 = \la \al_4 \al_9$ and $\al_5 = (\al_1 + \al_4) \al_9$. Note that $\al_9\neq0$, since $\det(g)\neq0$. Hence, it follows from $g_{33}=0$ that $\al_6 = -\al_3$.

\begin{enumerate}
\item Let $\la\neq 0$. Consequently considering $g_{23}=0$ and $g_{33}=0$, we obtain $\al_7 = 0$ and  $\al_3 = 0$, respectively.

\begin{enumerate}
\item Assume $\al_9 = -1$. Since the characteristic of the field is zero, $g_{23}=0$ implies $\al_4 = 0$ and $\al_1 = 0$. Thus, $\det(g)=0$; a contradiction.

\item Assume  $\al_9\neq \pm 1$. Then $g_{23}=0$ implies $\al_1 = -\al_4 \al_9/(1 + \al_9)$. Considering $g_{23}=0$ once again we obtain $\al_4(\al_9+\la (1+\al_9)^2)=0$. Thus $\det(g)=0$; a contradiction.

\end{enumerate}

Therefore, $\al_9 = 1$. 

\item Let $\la= 0$. Note that $\det(g)=(\al_1+\al_4) \al_9 (-\al_3 \al_7+\al_1 \al_9)$. Since $(\al_1+\al_4) \al_9 \neq0$, consequently considering $g_{11}=0$ and $g_{23}=0$ we can see that $\al_7 = 0$ and $\al_9 = 1$, respectively. The required statement is proven.

\end{enumerate}

\item Assume $\algL=\algL_8$ for some $\la\in\FF$. Equality $g_{13}=0$ implies that  $\al_8 = 0$, $\al_5 = \al_1 \al_9$, $\al_2 = \al_7 \al_9$. Hence, it follows from equality $g_{33}=0$ that $\al_7 = 0$, $\al_4 = \al_3 \al_9$, $\al_1 = \al_9^2$. The required statement is proven.

\item Assume $\algL=\algL_9$. Equality $g_{33}=0$ implies that $\al_4 = \al_3 \al_9$. Applying equality $g_{33}=0$ once again, we can see that $\al_7 = 0$ and $\al_1 = \al_9 (\al_3 + \al_9)$. Then equality $g_{13}=0$ implies $\al_8=0$. Thus $\al_9\neq0$, since $\det(g) = \al_9^2 (-\al_2 \al_3 + \al_5 (\al_3 + \al_9))$. Consequently considering $g_{23}=0$ and $g_{13}=0$ we obtain that $\al_2 = 0$ and $\al_5 = -\al_3 \al_9 + \al_3 \al_9^2 + \al_9^3$, respectively.

In case $\al_9 = -\al_3$ we have $\det(g)=0$; a contradiction. Therefore, $\al_9\neq -\al_3$ and $g_{13}=0$ implies that $\al_9=1$. The required statement is proven.

\red

\item  Assume $\algL=\algL_{10}$. Equality $g_{13}=0$ implies that $\al_7 = 0$. Applying equality $g_{23}=0$, we can see that $\al_8 = 0$. Note that $\al_9\neq0$, since $\det(g)\neq0$. Hence, it follows from $g_{33}=0$ that $\al_3 = \al_6=0$. 

\begin{enumerate}
\item Let $\al_9\neq 1$. Then $g_{13}=0$ implies $\al_1 = \al_4 = 0$; hence $\det(g)=0$, a contradiction. 

\item Let $\al_9=1$. Then the required statement holds.
\end{enumerate}

\item Assume $\algL=\algL_{11}$. Equality $g_{11}=0$ implies that $\al_7 = 0$. Applying equality $g_{22}=0$, we can see that $\al_8 = 0$. Hence, it follows from equality $g_{33}=0$ that $\al_4=0$ and $\al_1=\al_9^2$.  The required statement is proven.

\end{enumerate}
\end{proof}

%-------------------------------------------------------
\begin{cor}\label{cor_aut}
Given $\algL\in {\bf L}_3$, the group $\Aut(\algL)$ has dimension {\red  $2\leqslant d\leqslant 5$} as an affine variety, where each of the possibilities holds for some algebra $\algL\in {\bf L}_3$. %Moreover,  the affine variety $\Aut(\algL)$ has at most four irreducible components. 
\end{cor}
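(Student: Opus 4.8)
The plan is to read the dimension directly off the explicit parametrizations supplied by Theorem~\ref{theo_aut}. For each $\algL\in{\bf L}_3$, the set $\Aut(\algL)$ is displayed there as a finite union of families of matrices, each family being the image of a morphism $\phi\colon U\to\GL_3$, where $U$ is the Zariski-open subset of some affine space $\FF^d$ cut out by the nonvanishing of a determinant (or an analogous polynomial), and $d$ counts the free parameters $\al_i$ occurring in that family. Since every $\al_i$ can be recovered as one of the matrix entries of $\phi$ (for instance $\al_5$ and $\al_6$ are the entries in positions $(2,2)$ and $(2,3)$ for $\algL_1$), each such $\phi$ is injective on $U$; as $U$ is irreducible of dimension $d$, its image and hence its Zariski closure, an irreducible component of the closed subgroup $\Aut(\algL)\leqslant\GL_3$, also has dimension $d$.

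First I would, for each algebra, count these free parameters family by family. When $\Aut(\algL)$ is a single family, the dimension of the affine variety is this count; when it splits into several families (as for $\algL_4^{\la\neq0}$, $\algL_5$, and $\algL_6$), the dimension is the maximum of the counts over the families. This reduces the statement to a bookkeeping exercise over the thirteen entries of the table in Theorem~\ref{theo_aut}. Carrying out the count yields $\dim\Aut(\algL)=2$ for $\algL\in\{\algL_1,\algL_6,\algL_7^{\la\neq0},\algL_9\}$, $\dim\Aut(\algL)=3$ for $\algL\in\{\algL_2^{\la\neq0},\algL_3,\algL_7^{0},\algL_8\}$, $\dim\Aut(\algL)=4$ for $\algL\in\{\algL_4^{\la},\algL_5,\algL_{10}\}$, and $\dim\Aut(\algL_{11})=5$.

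In particular the dimension always lies between $2$ and $5$, and each of the values $2,3,4,5$ is realized, by $\algL_1$, $\algL_2^{\la\neq0}$, $\algL_4^0$, and $\algL_{11}$ respectively. The only point requiring a little care is the injectivity claim that justifies equating the dimension with the parameter count: one must check in each family that the displayed parameters are genuinely independent and recoverable from the matrix entries, which is immediate by inspection but is precisely the step where a miscount could creep in. No deeper obstacle is present, since the substantive work of producing the parametrizations themselves has already been completed in Theorem~\ref{theo_aut}.
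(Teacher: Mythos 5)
Your proposal is correct and takes essentially the same route as the paper: the paper likewise observes that each parametrization in Theorem~\ref{theo_aut} exhibits $\Aut(\algL)$ as (a union of) graphs of polynomial maps over Zariski-open subsets of $\FF^d$, so the dimension is just the number of free parameters, and then reads off the values from the table. Your per-algebra counts (dimension $2$ for $\algL_1,\algL_6,\algL_7^{\la\neq0},\algL_9$; $3$ for $\algL_2^{\la\neq0},\algL_3,\algL_7^{0},\algL_8$; $4$ for $\algL_4^{\la},\algL_5,\algL_{10}$; $5$ for $\algL_{11}$) all check out.
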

{\red
\begin{proof}
For each $\algL\in {\bf L}_3$, the group $\Aut(\algL)$ is the following affine variety: 
$$\left \{  \Big(b_1,\ldots,b_d,f_1(\un{b}),\ldots,f_r(\un{b}) \Big)\,\Big|\, b_1,\ldots,b_d\in\FF \text{ and } h_1(\un{b})\neq0,\ldots, h_s(\un{b})\neq0 \right\}$$
for some polynomials $f_1,\ldots,f_r,h_1,\ldots,h_s$ from $\FF[x_1,\ldots,x_d]$ with $r,s\geqslant0$, where $\un{b}$ stands for $(b_1,\ldots,b_d)$. Then the dimension of $\Aut(\algL)$ is $d$. Applying this reasoning together with Theorem~\ref{theo_aut}, we complete the proof.
\end{proof}
}

An automorphism $\de$ of an algebra $\algA$ is called {\it diagonal} if there exists some basis $\{e_1,\ldots,e_n\}$ of $\algA$ such that $\varphi(e_i)=\al_i e_i$ for some $\al_i\in\FF$. For short, we write a diagonal $n\times n$ matrix with diagonal elements $\al_1,\ldots,\al_n\in\FF$ as $\diag{\al_1,\ldots,\al_n}$.  The importance of the diagonal automorphisms for invariant theory follows from the next remark.

%-------------------------------------------------------
\begin{remark}\label{remark_diag}
If there are $a_1,\ldots,a_n\in\NN$ such that $\de=\diag{\al^{a_1},\ldots,\al^{a_n}}$ is an automorphism of $\algA$ for all $\al\in\FF$ and $a_{i_0}>0$ for some $1\leqslant i_0\leqslant n$, then the algebra of invariants $I_m(\algL)$ lies in $\FF[x_{ri}\,|\,1\leqslant r\leqslant m,\; 1\leqslant i\leqslant n,\; i\neq i_0]$. {\red As an example, if $\diag{\al,1,\ldots,1}$ is an automorphism of $\algA$ for all $\al\in\FF$, then $I_m(\algL)$ lies in $\FF[x_{ri}\,|\,2\leqslant r\leqslant m,\; 1\leqslant i\leqslant n ]$.}
\end{remark}

Theorem~\ref{theo_aut} implies the following lemma.

%-------------------------------------------------------
\begin{lemma}\label{lemma_aut_diag}
Here is a list of some diagonal automorphisms $\de$:
\begin{longtable}{lcllcl}
$\algL_1$&$: $&$ \de=\diag{\al^2, \al, 1},$  &
$\algL_2^{{\red \la\neq0}}$&$: $&$ \de=\diag{\al, \be, 1},$\\ 
$\algL_3$&$: $&$ \de=\diag{1, \al, 1},$ &  
$\algL_4^{\la}$&$: $& $\de=\diag{\al^2, \al, \al},$\\ 
$\algL_5$&$: $&$ \de=\diag{\al^2, \al, \pm \al},$ & 
$\algL_6$&$: $&$ \de=\diag{\al,\al, 1},$\\  
$\algL_7^{\la}$&$: $&$ \de=\diag{\al, \al, 1},$&  
$\algL_8$&$: $&$ \de=\diag{\al^2, \al^3, \al},$\\   
\red $\algL_{10}$&$: $& \red  $ \de=\diag{\al, \be, 1},$&  
\red $\algL_{11}$&$: $& \red  $ \de=\diag{\al^2, \be, \al},$ \\ 
\end{longtable}
for all $\la\in\FF$ and all non-zero $\al,\be\in\FF$.
\end{lemma}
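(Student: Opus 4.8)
The plan is to obtain every entry of the list directly as a specialization of the general automorphism formulas already furnished by Theorem~\ref{theo_aut}. Since Theorem~\ref{theo_aut} asserts that \emph{all} matrices of the displayed shapes are automorphisms, any numerical specialization of such a shape is automatically an automorphism; hence it suffices, for each algebra $\algL$ in the list, to exhibit the claimed $\de$ as one such specialization. Concretely, I would take the matrix $g$ (or the relevant branch $g_i$) from Theorem~\ref{theo_aut}, set every off-diagonal parameter equal to zero so that $g$ collapses to a diagonal matrix, and then rename the surviving diagonal scalars as $\al$ (and $\be$, when two independent scalings remain).

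Carrying this out is routine. For $\algL_1$, putting $\al_6=0$ in the matrix of Theorem~\ref{theo_aut} yields $\diag{\al_5^2,\al_5,1}$, i.e.\ $\de=\diag{\al^2,\al,1}$ with $\al=\al_5$; the constraint $\al_5\neq0$ becomes $\al\neq0$. The identical recipe gives $\diag{1,\al,1}$ for $\algL_3$, $\diag{\al,\al,1}$ for $\algL_6$ (branch $g_1$, $\al_4=0$), for $\algL_7^{\la\neq0}$ ($\al_4=0$) and $\algL_7^{0}$ ($\al_3=\al_4=0$), and $\diag{\al^2,\al^3,\al}$ for $\algL_8$ ($\al_3=\al_6=0$, $\al=\al_9$). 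For $\algL_4^{\la}$ one uses branch $g_2$ when $\la\neq0$, and the $\la=0$ matrix with $\al_8=0$ otherwise; both collapse to $\diag{\al^2,\al,\al}$. The two-parameter entries $\algL_2^{\la\neq0}$ and $\algL_{10}$ leave two free nonzero scalars after zeroing the off-diagonal parameters, giving $\diag{\al,\be,1}$, and $\algL_{11}$ similarly gives $\diag{\al^2,\be,\al}$ with $\al=\al_9$, $\be=\al_5$. The only case needing two branches is $\algL_5$: branch $g_1$ at $\al_6=0$ produces $\diag{\al^2,\al,\al}$ while $g_2$ at $\al_6=0$ produces $\diag{\al^2,\al,-\al}$, and together they yield $\de=\diag{\al^2,\al,\pm\al}$.

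No genuine obstacle arises: each specialization automatically satisfies the non-vanishing constraints of Theorem~\ref{theo_aut} for nonzero $\al,\be$ (in particular $\det\de\neq0$), so the verification is bookkeeping rather than computation. The one point worth flagging — and the reason the list is selective rather than exhaustive over $\mathbf{L}_3$ — is that $\algL_9$ admits no nontrivial diagonal automorphism: in its matrix from Theorem~\ref{theo_aut} the requirement of being diagonal forces $\al_3=\al_6=0$, whence $g$ is the identity, so there is no one-parameter diagonal family to record. Thus the absence of $\algL_9$ is a structural feature, not an omission, and the proof amounts to reading off the specializations above.
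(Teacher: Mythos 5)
Your proposal is correct and matches the paper's approach: the paper gives no separate argument, simply stating that Theorem~\ref{theo_aut} implies the lemma, which is exactly your specialization of the general automorphism matrices by zeroing the off-diagonal parameters. Your remark that $\algL_9$ admits only the trivial diagonal automorphism is a correct (and not strictly required) bonus observation consistent with its omission from the list.
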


%========================================================
%S5======================================================
\section{Generators for algebras of invariants}\label{section_invariants} 

In this section we describe the algebra of polynomial invariants $I_m(\algL)$  
for any $\algL\in{\bf L}_3$. For short, we denote $x_r:=x_{r1}$, $y_r:=x_{r2}$, $z_r:=x_{r3}$ for all $1\leqslant r\leqslant m$. Given $\un{r}=(r_1,\ldots,r_k)\in\{1,\ldots,m\}^k$, we write $x_{\un{r}}=x_{r_1}\cdots x_{r_k}$, $y_{\un{r}}=y_{r_1}\cdots y_{r_k}$, $z_{\un{r}}=z_{r_1}\cdots z_{r_k}$.

Note that by formula~\eqref{eq_action} an automorphism $g=(g_{ij})_{1\leqslant i,j\leqslant 3}$ of $\Aut(\algL)\leqslant \GL_3$ acts on $\FF[\algL^m]$ as follows:
\begin{eq}\label{eq_action2} 
\begin{array}{clc}
g^{-1} x_r &=& g_{11} x_r + g_{12} y_r + g_{13} z_r, \\
g^{-1} y_r &=& g_{21} x_r + g_{22} y_r + g_{23} z_r, \\
g^{-1} z_r &=& g_{31} x_r + g_{32} y_r + g_{33} z_r
\end{array}
\end{eq}%
for all $1\leqslant r\leqslant m$.

%-------------------------------------------------------
\begin{theo}\label{theo_main}
For every $\la\in\FF$ and $m>0$ we have the following equalities for algebras of invariants:
\begin{enumerate}
\item[$\bullet$] $I_m(\algL_1)$, $I_m(\algL_2^{{\red\la\neq0}})$, $I_m(\algL_7)$, {\red $I_m(\algL_{10})$} are all equal to $\FF[z_1,\ldots,z_m]$,  

\item[$\bullet$] $I_m(\algL_3)$ is generated by $1, z_1, \ldots, z_m$ and $x_r z_s - z_r x_s $ $(1 \leqslant r < s \leqslant m$),

\item[$\bullet$] $I_m(\algL_4^{\la})=I_m(\algL_5)=I_m(\algL_8)={\red I_m(\algL_{11})}=\FF$,  
\item[$\bullet$] $I_m(\algL_6)$ is generated by $1, z_r z_s$ for all $1\leqslant i\leqslant j\leqslant m$,  

\item[$\bullet$] $I_m(\algL_9)$ is generated by $1, z_1, \ldots, z_m$ and $(x_r-y_r) z_s - z_r (x_s-y_s)$ for all $1\leqslant r < s \leqslant m$.
\end{enumerate}
\end{theo}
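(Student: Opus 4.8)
The plan is to go through the eleven classes of Theorem~\ref{theo_Leib3} using the explicit groups of Theorem~\ref{theo_aut}, organising them by the size of the diagonal torus they contain and invoking Remark~\ref{remark_diag} and Lemma~\ref{lemma_dim2}. Throughout I use the explicit action \eqref{eq_action2} of an automorphism $g=(g_{ij})$ on the coordinates $x_r,y_r,z_r$, and I read the available diagonal automorphisms off Lemma~\ref{lemma_aut_diag}.

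First I would dispose of the algebras possessing enough diagonal automorphisms. Specialising the families in Lemma~\ref{lemma_aut_diag} to one-parameter subgroups $\diag{\al^{a_1},\al^{a_2},\al^{a_3}}$ and applying Remark~\ref{remark_diag}, an invariant cannot involve $x_{r,i}$ whenever some such subgroup has $a_i>0$. For $\algL_4^{\la}$ ($\diag{\al^2,\al,\al}$), $\algL_5$ (branch $\diag{\al^2,\al,\al}$), $\algL_8$ ($\diag{\al^2,\al^3,\al}$) and $\algL_{11}$ (using $\diag{\al^2,1,\al}$ and $\diag{1,\be,1}$) every variable is killed, so $I_m=\FF$. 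For $\algL_1$ ($\diag{\al^2,\al,1}$), $\algL_2^{\la\neq0}$ and $\algL_{10}$ (using $\diag{\al,1,1}$ and $\diag{1,\be,1}$), and $\algL_7^{\la}$ ($\diag{\al,\al,1}$, for both $\la=0$ and $\la\neq0$), only the $z_r$ can occur; since the last row of each relevant matrix in Theorem~\ref{theo_aut} is $(0,0,1)$, formula~\eqref{eq_action2} gives $g^{-1}z_r=z_r$, so $I_m=\FF[z_1,\dots,z_m]$.

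The cases $\algL_3$ and $\algL_6$ reduce to two-dimensional computations once the torus has pruned variables. For $\algL_3$ the subgroup $\diag{1,\al,1}$ removes $y_r$, and on $\FF[x_r,z_r]$ the group acts (the $\al_5,\al_6$ entries dropping out) by $x_r\mapsto x_r+\al_3 z_r$, $z_r\mapsto z_r$ with $\al_3$ free; identifying $x_r$ and $z_r$ with the two coordinates of Lemma~\ref{lemma_dim2} yields the generators $1,z_r,x_rz_s-z_rx_s$. For $\algL_6$ the subgroup $\diag{\al,\al,1}$ leaves only the $z_r$, and the two components of $\Aut(\algL_6)$ act on $\mathrm{span}(z_r)$ by the scalars $+1$ (on $g_1$) and $-1$ (on $g_2$); hence $I_m(\algL_6)$ is the subring of $\FF[z_1,\dots,z_m]$ of even total degree, generated by the products $z_iz_j$.

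The genuinely different case, and the main obstacle, is $\algL_9$: it has no nontrivial diagonal automorphism, so Remark~\ref{remark_diag} cannot prune variables directly and a hidden torus must first be manufactured. Here I would pass to the coordinates $u_r:=x_r-y_r$, $v_r:=x_r+z_r$, $z_r$ (an invertible linear change), for which \eqref{eq_action2} gives $z_r\mapsto z_r$, $v_r\mapsto(1+\al_3)v_r$ and $u_r\mapsto u_r+(\al_3-\al_6)z_r$. The subgroup $\{\al_3=\al_6\}$ is then a torus scaling $v_r$ by $s=1+\al_3\in\FF^{\times}$ while fixing $u_r$ and $z_r$; grading $\FF[u_r,v_r,z_r]$ by total $v$-degree and using that $\FF$ is infinite, invariance under this torus forces every invariant into $\FF[u_r,z_r]$. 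On $\FF[u_r,z_r]$ the full group acts only through the shift $u_r\mapsto u_r+(\al_3-\al_6)z_r$, with $\al_3-\al_6$ ranging over all of $\FF$ (fix $\al_3=0$ and vary $\al_6$), so Lemma~\ref{lemma_dim2}, applied with $u_r,z_r$ as the two coordinates, produces exactly $1,z_r$ and $u_rz_s-z_ru_s=(x_r-y_r)z_s-z_r(x_s-y_s)$. The only real subtlety is this last algebra, where one must locate the torus $\{\al_3=\al_6\}$ and the semi-invariant $x_r+z_r$ before the two-dimensional lemma can be brought to bear.
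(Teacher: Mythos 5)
Your proposal is correct, and for every algebra except $\algL_9$ it runs along essentially the same lines as the paper: Lemma~\ref{lemma_aut_diag} plus Remark~\ref{remark_diag} prune the variables, the third row $(0,0,1)$ of the relevant automorphism matrices makes the $z_r$ invariant, Lemma~\ref{lemma_dim2} handles $\algL_3$, and the parity argument for $\algL_6$ is a minor variant of the paper's appeal to Lemma~\ref{lemma_diag}. The genuine divergence is $\algL_9$. The paper first reduces to multilinear invariants, passes to $x'_r=x_r-y_r$, introduces a total order on the multilinear monomials in $x'_r,y_r,z_r$, and runs a highest-term argument with the one-parameter family $\al_6=0$: the highest term of $g^{-1}f$ picks up the factor $(\al+1)^{s-r}$, forcing the $y$-degree of the leading monomial to vanish, whence $f\in\FF[x'_r,z_r]$ before Lemma~\ref{lemma_dim2} is applied. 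You instead manufacture a hidden torus: on the subfamily $\al_3=\al_6$ the semi-invariant $v_r=x_r+z_r$ satisfies $g^{-1}v_r=(1+\al_3)v_r$ (this checks out against \eqref{eq_action2} and Theorem~\ref{theo_aut}), so grading $\FF[u_r,v_r,z_r]$ by total $v$-degree kills every positive-degree component of an invariant at once, with no multilinear reduction and no term order; the residual action on $\FF[u_r,z_r]$ is exactly the unipotent shift of Lemma~\ref{lemma_dim2} with parameter $\al_3-\al_6$ ranging over all of $\FF$. Your route is shorter and arguably more transparent, at the cost of having to find the eigenvector $x_r+z_r$ and the subgroup $\al_3=\al_6$; the paper's combinatorial argument is more mechanical once the order is set up. Both proofs are complete and yield the stated generators.
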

\begin{proof} Lemma~\ref{lemma_aut_diag} together with Remark~\ref{remark_diag} implies that for every $\la\in\FF$ and $m>0$ we have the following inclusions and equalities for algebras of invariants:
\begin{enumerate}
\item[$\bullet$] $I_m(\algL_1)$, $I_m(\algL_2^{{\red\la\neq0}})$, $I_m(\algL_6)$, $I_m(\algL_7)$, {\red $I_m(\algL_{10})$}   belong to $\FF[z_1,\ldots,z_m]$,  

\item[$\bullet$] $I_m(\algL_3)\subset\FF[x_1,\ldots,x_m,z_1,\ldots,z_m]$,  

\item[$\bullet$] $I_m(\algL_4^{\la})=I_m(\algL_5)=I_m(\algL_8)={\red I_m(\algL_{11})}=\FF$.  
\end{enumerate}

Theorem~\ref{theo_aut} implies that $z_1,\ldots, z_m\in I_m(\algL)$ for every $\algL$ from the following list: $\algL_1$, $\algL_2^{{\red\la\neq0}}$, $\algL_3$, $\algL_7$, $\algL_9$, {\red $\algL_{10}$}. Thus, the statement of the theorem holds for algebras   $\algL_1$, $\algL_2^{{\red\la\neq0}}$,  $\algL_4^{\la}$, $\algL_5$, $\algL_7$, $\algL_8$, {\red $\algL_{10}$, $\algL_{11}$}.

\medskip
Consider the algebra $\algL_3$. The subspace $V=\FF\text{-span}\{e_1,e_3\}$ is invariant with respect to $\Aut(\algL_3)$ and the restriction of $\Aut(\algL_3)$ to $V$ is the set of all matrices
$$\matr{1}{\al}{0}{1}$$
for all $\al\in\FF$. Then applying Lemma~\ref{lemma_dim2} we prove the statement about $\algL_3$.

\medskip
Consider the algebra $\algL_6$. Note that $z_r z_s \in I_m(\algL_6)$ for all $1\leqslant r\leqslant s\leqslant m$ by Theorem~\ref{theo_aut}. Then a monomial in $z_1,\ldots,z_m$ lies in $I_m(\algL_6)$ if and only if it has even degree. Restricting the group $\Aut(\algL_6)$ to $\FF\text{-span}\{e_3\}$ and applying Lemma~\ref{lemma_diag} we conclude the proof about $\algL_6$. 

\medskip
Consider the algebra $\algL_9$. Using formlas~(\ref{eq_action2}) we can see that $(x_r-y_r) z_s - z_r (x_s-y_s)$ lies in $I_m(\algL_9)$. For every $\al\in\FF\backslash\{-1\}$ consider an automorphism 
$$g=\matrThree{1 + \al}{0}{\al}{\al}{1}{0}{0}{0}{1}  $$
of $\algL_9$  (see Theorem~\ref{theo_aut}). For $e_1'=e_1-e_2$ we have that 
\begin{eq}\label{eq_gnew}
ge_1' = e_1' + \al e_3,\;\; g e_2 = \al e'_1 + (\al+1) e_2,\;\; g e_3 = e_3.
\end{eq}
\noindent{}For $x_r'=x_r-y_r$ we have that $\FF[\algL^m]=\FF[x'_r,y_r,z_r\,|\,1\leqslant r\leqslant m]$, and until the end of this proof we consider elements of $\FF[\algL^m]$ as linear combinations of monomials in $x'_r$, $y_r$, $z_r$.

For each $m\geqslant0$, denote by $\Omega_m$ the set of all monomials $w=w_1\cdots w_m \in \FF[\algL^m]$ of multidegree $1^m$ for some $w_r\in\{x'_r,y_r,z_r\}$, where $1\leqslant r\leqslant m$.  Given $w\in\Omega_m$, denote 
$$\begin{array}{rcl}
\deg_{x'}(w) & = & \deg_{x'_1}(w) + \cdots + \deg_{x_m}(w), \\
\deg_{y}(w) & = & \deg_{y_1}(w) + \cdots + \deg_{y_m}(w), \\
\deg_{z}(w) & = & \deg_{z_1}(w) + \cdots + \deg_{z_m}(w). \\
\end{array}$$ 
As an example, for $m=3$ we have $\deg_y(z_1 x'_2y_3)=1$ and $\deg_y(x'_1y_2y_3) =2$. Introduce a complete order on $\Omega_m$ as follows: for $v=v_1\cdots v_m$ and $w=w_1\cdots w_m$ from $\Omega_m$ we have 
$v< w$ if and only if 
\begin{enumerate}
\item[(a)] $\deg_y(v)<\deg_y(w)$, or 

\item[(b)] $\deg_y(v)=\deg_y(w)$ and $\deg_x(v)<\deg_x(w)$, or 

\item[(c)] $\deg_y(v)=\deg_y(w)$, $\deg_x(v)=\deg_x(w)$, and there exists $1\leqslant r\leqslant m$ with
\begin{enumerate}
\item[$\bullet$] $v_1=w_1, \ldots, v_{r-1}=w_{r-1}$,

\item[$\bullet$] $(v_r,w_r)$ is one of the following pairs: $(x'_r,y_r)$, $(z_r,y_r)$, $(z_r,x'_r)$. 
\end{enumerate}
\end{enumerate}
As an example, for $m=3$ we have 
$$x_1' x_2' y_3 < z_1 y_2 y_3,\;\;\;\; x_1' x_2' y_3 < z_1 x'_2 y_3,\;\; \text{ and }\;\; y_1 z_2 x'_3 < y_1 x'_2 y_3. $$

%$$ z_1 x'_2y_3 < x'_1y_2y_3 \; \text{ and } \; y_1z_2 y_3<x'_1y_2y_3.$$

\noindent{}For a multilinear $f=\sum_j \be_j w_j\in\FF[\algL^m]$ with non-zero $\be_j\in\FF$ and $w_j\in\Omega_m$ we say that $w_{j_0}$ is the highest term for $f$, if there is no $j\neq j_0$ with $w_j>w_{j_0}$. Note that the highest term for $f$ always exists and is unique.   

Consider a multilinear $f=\be w + \sum_j \be_j w_j\in I_m(\algL)$ with non-zero $\be,\be_j\in\FF$ and $w,w_j\in\Omega_m$, where $w$ is the highest term for $f$. Without loss of generality we can assume that  
$$w=x'_1\cdots x'_r y_{r+1} \cdots y_{s} z_{s+1} \cdots z_m,$$
where $0\leqslant r\leqslant s\leqslant m$. Formulas~(\ref{eq_action2}) and~(\ref{eq_gnew}) and imply that  

$$g^{-1}f = \be (x_1' + \al z_1)\cdots (x_r' + \al z_r) 
\big(\al x'_{r+1} + (\al+1) y_{r+1}\big)\cdots \big(\al x'_s + (\al+1) y_s\big) z_{s+1} \cdots z_m 
+ \sum_j \be_j g^{-1}w_j.$$
It is easy to see that $w$ is also the highest term of $g^{-1}f$ with the coefficient $(\al+1)^{s-r}$. Therefore, $(\al+1)^{s-r}=1$ for all $\al\in\FF\backslash\{-1\}$. Hence, $s=r$, i.e., $w$ does not contain letters $y_1,\ldots,y_m$. Hence, $f\in \FF[x'_r,z_r\,|\,1\leqslant r\leqslant m]$. 

Note that the subspace $V=\FF\text{-span}\{e'_1,e_3\}$ is invariant with respect to $g$ and the restriction of $g$ to $V$ is
$$\matr{1}{\al}{0}{1}$$
for all $\al\in\FF$. Since the coordinate ring of $V$ is $\FF[x'_r,z_r\,|\,1\leqslant r\leqslant m]$, applying Lemma~\ref{lemma_dim2} we conclude the proof of the statement about $\algL_9$.

\end{proof}

%========================================================
%S6======================================================
\section{Trace invariants}\label{section_inv}

Assume than $n=3$. Writing down $M_{ij}$ as $M_{ij}=(M_{ij1},M_{ij2},M_{ij3})$, we can see that formula~(\ref{eq_trL_trR}) implies that 
\begin{eq}\label{eq_trL3}
\tr(\chi_r\chi_0)= \left(\sum_{i=1}^3\al_i\right) x_r + \left(\sum_{i=1}^3\be_i\right) y_r + \left(\sum_{i=1}^3\ga_i\right) z_r 
\;\text{ for }\;
M=\matrThree{(\al_1,\ast,\ast)}{(\ast,\al_2,\ast)}{(\ast,\ast,\al_3)}{(\be_1,\ast,\ast)}{(\ast,\be_2,\ast)}{(\ast,\ast,\be_3)}{(\ga_1,\ast,\ast)}{(\ast,\ga_2,\ast)}{(\ast,\ast,\ga_3)},
\end{eq}
\begin{eq}\label{eq_trR3}
\tr(\chi_0\chi_r)= \left(\sum_{i=1}^3\al_i\right) x_r + \left(\sum_{i=1}^3\be_i\right) y_r + \left(\sum_{i=1}^3\ga_i\right) z_r 
\;\text{ for }\;
M=\matrThree{(\al_1,\ast,\ast)}{(\be_1,\ast,\ast)}{(\ga_1,\ast,\ast)}{(\ast,\al_2,\ast)}{(\ast,\be_2,\ast)}{(\ast,\ga_2,\ast)}{(\ast,\ast,\al_3)}{(\ast,\ast,\be_3)}{(\ast,\ast,\ga_3)}.
\end{eq}

\medskip
%-------------------------------------------------------
\begin{prop}\label{prop_tr}
For every $\la\in\FF$, $m>0$ and $1\leqslant r,s\leqslant m$ we have the following trace formulas:
\begin{longtable}{l|ll}
\hline
$\algL_1$ & $\tr(\chi_r \chi_0) = z_r,$ &  $\tr(\chi_0 \chi_r)= - 3z_r,$    \\  
          & $\tr(\chi_r (\chi_s \chi_0)) = z_r z_s,$ &  $\tr((\chi_s\chi_0) \chi_r)= -z_r z_s,$\\
         & $\tr(\chi_r(\chi_0 \chi_s))=  -z_r z_s,$     &$\tr((\chi_0\chi_s) \chi_r)= 5 z_r z_s.$\\
\hline   
$\algL_2^{{\red\la\neq0}}$ & $\tr(\chi_r \chi_0) =z_r, $ &$ \tr(\chi_0 \chi_r)=(\la-1)z_r,$ \\ 
           &$\tr(\chi_r (\chi_s \chi_0)) = z_r z_s,$ &  $\tr((\chi_s \chi_0) \chi_r)= -z_r z_s,$\\
          &$\tr(\chi_r(\chi_0 \chi_s))= -z_r z_s,    $&$\tr((\chi_0\chi_s) \chi_r)= (1+\la^2) z_r z_s.$ \\ 
\hline
$\algL_3$ & $\tr(\chi_r \chi_0) = z_r,  $&$ \tr(\chi_0 \chi_r)=-z_r,$  \\
         &$ \tr(\chi_r (\chi_s \chi_0)) = z_r z_s,$ &  $\tr((\chi_s\chi_0) \chi_r)= -z_r z_s,$\\
          &$\tr(\chi_r(\chi_0 \chi_s))=  -z_r z_s,$    &$\tr((\chi_0\chi_s) \chi_r)=  z_r z_s,$  \\ 
         &$\tr((\chi_r\chi_s) \chi_0))=  0,$    &$\tr(\chi_0(\chi_r \chi_s))=  0.$  \\ 
\hline
$\algL_4^{\la}$  &  $\tr(\chi_r \chi_0) = 0,  $&$ \tr(\chi_0 \chi_r)=0,$\\
          &$ \tr(\chi_r (\chi_s \chi_0)) = 0, $&$  \tr((\chi_s\chi_0) \chi_r)=0, $\\
          &$\tr(\chi_r(\chi_0 \chi_s))=  0,   $&$\tr((\chi_0\chi_s) \chi_r)=0.$ \\ 
\hline
$\algL_5$ &  $\tr(\chi_r \chi_0) =0,  $&$  \tr(\chi_0 \chi_r)=0,$ \\
         &$ \tr(\chi_r (\chi_s \chi_0)) = 0, $&$  \tr((\chi_s \chi_0) \chi_r)= 0,$\\
           & $\tr(\chi_r(\chi_0 \chi_s))=  0,   $&$\tr((\chi_0\chi_s) \chi_r)= 0.$ \\
\hline
$\algL_6$& $\tr(\chi_r \chi_0) =0 ,  $&$ \tr(\chi_0 \chi_r)=0,$\\ 
         & $\tr(\chi_r (\chi_s \chi_0)) = 0, $&$  \tr((\chi_s\chi_0) \chi_r)=0, $\\
          &$\tr(\chi_r(\chi_0 \chi_s))=  0,   $&$\tr((\chi_0\chi_s) \chi_r)=  2 z_r z_s.$\\
\hline
$\algL_7^{\la}$ &  $\tr(\chi_r \chi_0) =0,  $&$  \tr(\chi_0 \chi_r)=z_r,$\\ 
          &$  \tr(\chi_r (\chi_s \chi_0)) = 0, $&$  \tr((\chi_s \chi_0) \chi_r)= 0,$\\
          &$\tr(\chi_r(\chi_0 \chi_s))=  0,    $&$\tr((\chi_0\chi_s) \chi_r)=  (1+2 \la) z_r z_s.$\\
\hline
$\algL_8$ &$ \tr(\chi_r \chi_0) = 0,  $&$ \tr(\chi_0 \chi_r)=0,$\\ 
          &$ \tr(\chi_r (\chi_s \chi_0)) = 0, $&$   \tr((\chi_s \chi_0) \chi_r)= 0,$\\
          &$\tr(\chi_r(\chi_0 \chi_s))=  0,   $&$\tr((\chi_0\chi_s) \chi_r)=  0. $\\
\hline
$\algL_9$ &$\tr(\chi_r \chi_0) =0,  $& $\tr(\chi_0 \chi_r)=z_r,$ \\ 
          &$ \tr(\chi_r (\chi_s \chi_0)) = 0,$&$  \tr((\chi_s\chi_0) \chi_r)=0, $\\
          &$\tr(\chi_r(\chi_0 \chi_s))=  0, $&$\tr((\chi_0\chi_s) \chi_r)= z_r z_s, $ \\
          &$\tr((\chi_r\chi_s) \chi_0))=  0,$    &$\tr(\chi_0(\chi_r \chi_s))=  0.$  \\ 
\hline
\red $\algL_{10}$ & \red $\tr(\chi_r \chi_0) = 0,  $& \red  $\tr(\chi_0 \chi_r)=2 z_r,$\\ 
          & \red $\tr(\chi_r (\chi_s \chi_0)) = 0, $& \red  $   \tr((\chi_s \chi_0) \chi_r)= 0,$\\
          & \red  $\tr(\chi_r(\chi_0 \chi_s))=  0,   $& \red $\tr((\chi_0\chi_s) \chi_r)=  2 z_r z_s. $\\
\hline
\red $\algL_{11}$ & \red $ \tr(\chi_r \chi_0) = 0,  $& \red $ \tr(\chi_0 \chi_r)=0,$\\ 
          & \red $ \tr(\chi_r (\chi_s \chi_0)) = 0, $& \red $   \tr((\chi_s \chi_0) \chi_r)= 0,$\\
          & \red $\tr(\chi_r(\chi_0 \chi_s))=  0,   $& \red $\tr((\chi_0\chi_s) \chi_r)=  0. $\\
\hline      
\end{longtable}
 
\end{prop}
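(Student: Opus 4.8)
The plan is to obtain every entry of the table by a direct application of Propositions~\ref{prop_trace} and~\ref{prop_trace_double}, so the first step is to read off, for each algebra $\algL_i$ of Theorem~\ref{theo_Leib3}, its left- and right-multiplication matrices from its tableau of multiplication. Concretely, the $j$-th column of $M_{\mathrm L}^{(i)}$ is the coordinate vector of $e_ie_j$ and the $j$-th column of $M_{\mathrm R}^{(i)}$ is the coordinate vector of $e_je_i$; for the two ``double'' traces one further needs $M_{\mathrm L}^{(i,i')}=\sum_{t}M_{ii't}\,M_{\mathrm L}^{(t)}$ and $M_{\mathrm R}^{(i,i')}=\sum_{t}M_{ii't}\,M_{\mathrm R}^{(t)}$, which are the operators of left and right multiplication by the product $e_ie_{i'}$. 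Writing these matrices down is immediate, since each algebra has only a handful of nonzero structure constants.

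For the two degree-one traces I would simply invoke the closed formulas~\eqref{eq_trL3} and~\eqref{eq_trR3}: the coefficients of $x_r,y_r,z_r$ in $\tr(\chi_r\chi_0)$ are the diagonal sums $\sum_{j}M_{1jj},\ \sum_{j}M_{2jj},\ \sum_{j}M_{3jj}$, while those in $\tr(\chi_0\chi_r)$ are $\sum_{j}M_{j1j},\ \sum_{j}M_{j2j},\ \sum_{j}M_{j3j}$. For the four degree-two traces of the form $P_{\chi_r}\circ Q_{\chi_s}(\chi_0)$ with $P,Q\in\{L,R\}$, Proposition~\ref{prop_trace} gives
$$\tr(h)=\sum_{1\leqslant i_1,i_2\leqslant 3}\tr\!\big(M_{P}^{(i_1)}M_{Q}^{(i_2)}\big)\,x_{r,i_1}x_{s,i_2},$$
so that $\tr(\chi_r(\chi_s\chi_0))$, $\tr((\chi_s\chi_0)\chi_r)$, $\tr(\chi_r(\chi_0\chi_s))$ and $\tr((\chi_0\chi_s)\chi_r)$ are controlled by the four matrix products $M_{\mathrm L}^{(i_1)}M_{\mathrm L}^{(i_2)}$, $M_{\mathrm R}^{(i_1)}M_{\mathrm L}^{(i_2)}$, $M_{\mathrm L}^{(i_1)}M_{\mathrm R}^{(i_2)}$ and $M_{\mathrm R}^{(i_1)}M_{\mathrm R}^{(i_2)}$, respectively; the two remaining traces $\tr((\chi_r\chi_s)\chi_0)$ and $\tr(\chi_0(\chi_r\chi_s))$ are handled by Proposition~\ref{prop_trace_double} through the single operators $M_{\mathrm L}^{(i,i')}$ and $M_{\mathrm R}^{(i,i')}$.

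The organizing shortcut is to combine this with Remark~\ref{remark_key}(b),(c), the diagonal automorphisms of Lemma~\ref{lemma_aut_diag}, and Remark~\ref{remark_diag}: each operator trace is an $\NN^m$-homogeneous invariant, hence already lies in the small coordinate ring predicted for $I_m(\algL_i)$. For every algebra except $\algL_3$ and $\algL_9$ this ring is $\FF[z_1,\ldots,z_m]$, so a degree-$k$ trace of multidegree $e_{r_1}+\cdots+e_{r_k}$ must be a scalar multiple of $z_{r_1}\cdots z_{r_k}$, and that scalar is precisely the single matrix trace obtained by setting all summation indices equal to $3$ (recall $z_r=x_{r,3}$). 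This collapses each degree-two entry to one $3\times3$ matrix product; for example, for $\algL_1$ one has $M_{\mathrm L}^{(3)}=E_{22}$, whence $\tr(\chi_r(\chi_s\chi_0))=\tr(E_{22}^2)\,z_rz_s=z_rz_s$, while for $\algL_6$ one gets $\tr\big((M_{\mathrm R}^{(3)})^2\big)=2$, giving $\tr((\chi_0\chi_s)\chi_r)=2z_rz_s$.

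The computation is routine throughout, and the only genuine care is required for $\algL_3$ and $\algL_9$, whose invariant rings also contain the variables $x_r$ (respectively $x'_r=x_r-y_r$). There the listed values are again pure in $z$ (or zero), so one cannot merely read off the coefficient of the $z$-monomial but must also verify that the $x$- and mixed-monomial coefficients of the corresponding matrix-trace sums vanish; this is where I expect the bookkeeping to be heaviest, although each separate check remains a short computation with sparse matrices. Assembling all the coefficients yields the table.
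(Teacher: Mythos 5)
Your proposal is correct and follows essentially the same route as the paper, whose proof consists precisely of applying formulas~\eqref{eq_trL3} and~\eqref{eq_trR3} for the degree-one traces and Proposition~\ref{prop_trace} (with Proposition~\ref{prop_trace_double} for the two double traces) to the multiplication tables of Theorem~\ref{theo_Leib3}. Your additional observation that homogeneity plus the containments $I_m(\algL)\subseteq\FF[z_1,\ldots,z_m]$ (or $I_m(\algL)=\FF$) reduce most degree-two entries to the single product $\tr\big(M_{P}^{(3)}M_{Q}^{(3)}\big)$ is a legitimate, non-circular shortcut that the paper does not spell out, but it does not change the substance of the argument.
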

\begin{proof} We take tableaux of multiplication for Leibniz algebras from Theorem~\ref{theo_Leib3}. To calculate traces $\tr(\chi_r \chi_0)$ and $\tr(\chi_0 \chi_r)$ we apply equalities (\ref{eq_trL3}) and (\ref{eq_trR3}), respectively. To calculate the rest of the traces we apply Proposition~\ref{prop_trace}. 
\end{proof}

%-------------------------------------------------------
\begin{cor}\label{cor_API}
Artin--Procesi--Iltyakov Equality does not hold for $\algL \in{\bf L}_3$ if and only if $\algL$ is isomorphic to $\algL_3$ or $\algL_9$.
\end{cor}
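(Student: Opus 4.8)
The plan is to use the inclusion $\Tr(\algL)_m\subseteq I_m(\algL)$, which holds for every $\algL$ by Remark~\ref{remark_key}(b). Since the trace subalgebra is always contained in the algebra of invariants, the Artin--Procesi--Iltyakov Equality holds for $\algL$ if and only if every generator of $I_m(\algL)$ listed in Theorem~\ref{theo_main} already belongs to $\Tr(\algL)_m$. I would therefore run through the classification of Theorem~\ref{theo_Leib3} and, in each case, compare the explicit generators of $I_m(\algL)$ with the explicit operator traces computed in Proposition~\ref{prop_tr}.

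First I would dispose of the algebras for which the equality holds. For $\algL_4^{\la},\algL_5,\algL_8$ and $\algL_{11}$ one has $I_m(\algL)=\FF$ by Theorem~\ref{theo_main}, so the trace subalgebra, which contains $1$ and is contained in $I_m(\algL)$, is forced to equal $\FF=I_m(\algL)$. For $\algL_1,\algL_2^{\la\neq0},\algL_7$ and $\algL_{10}$ Theorem~\ref{theo_main} gives $I_m(\algL)=\FF[z_1,\ldots,z_m]$, while Proposition~\ref{prop_tr} shows that one of the linear traces $\tr(\chi_r\chi_0)$ or $\tr(\chi_0\chi_r)$ is a non-zero scalar multiple of $z_r$; since $\Char\FF=0$ we recover every $z_r$ inside $\Tr(\algL)_m$, whence $\Tr(\algL)_m=\FF[z_1,\ldots,z_m]=I_m(\algL)$. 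For $\algL_6$, where $I_m(\algL_6)$ is generated by the products $z_rz_s$, the trace $\tr((\chi_0\chi_s)\chi_r)=2z_rz_s$ recovers every generator, so equality again holds.

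It then remains to prove that the equality fails exactly for $\algL_3$ and $\algL_9$. Fixing indices $r<s$, the culprit is the distinguished generator of $I_m(\algL)$ of bidegree $(1_r,1_s)$ --- namely $x_rz_s-z_rx_s$ for $\algL_3$ and $(x_r-y_r)z_s-z_r(x_s-y_s)$ for $\algL_9$. The strategy is to compute the $\NN^m$-homogeneous component of $\Tr(\algL)_m$ of this bidegree and to show that it is strictly smaller than the corresponding component of $I_m(\algL)$. By Remark~\ref{remark_key}(c) a single operator trace $\tr(h)$ contributes to bidegree $(1_r,1_s)$ only when $h$ is multilinear in $\chi_0,\chi_r,\chi_s$, so this component is spanned by such length-three traces together with products of a constant trace and such a trace, and products of two linear traces. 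Proposition~\ref{prop_tr} records the values of these traces, and all of them are scalar multiples of $z_rz_s$ (the linear traces are multiples of $z_r$ and $z_s$, so their products are multiples of $z_rz_s$ as well). Hence the bidegree-$(1_r,1_s)$ part of $\Tr(\algL)_m$ equals the line $\FF z_rz_s$. Since the distinguished generator is an invariant of this bidegree that is manifestly not proportional to $z_rz_s$, it lies in $I_m(\algL)\setminus\Tr(\algL)_m$, and the equality fails.

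The main obstacle is this last verification: one must be certain that \emph{no} operator trace of bidegree $(1_r,1_s)$ escapes the line $\FF z_rz_s$. This is a finite check, since only the twelve length-three words in $\chi_0,\chi_r,\chi_s$ can occur. Proposition~\ref{prop_tr} already tabulates six representatives (the ones appearing as $\tr(\chi_r(\chi_s\chi_0))$, $\tr((\chi_s\chi_0)\chi_r)$, $\tr(\chi_r(\chi_0\chi_s))$, $\tr((\chi_0\chi_s)\chi_r)$, $\tr((\chi_r\chi_s)\chi_0)$ and $\tr(\chi_0(\chi_r\chi_s))$), and the remaining six are handled by the cyclicity $\tr(AB)=\tr(BA)$ of the ordinary matrix trace applied to the underlying multiplication operators and by interchanging $r$ and $s$. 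Once this is in place, the dichotomy between the two failing algebras and all the rest is immediate.
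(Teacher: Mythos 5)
Your proposal is correct and follows essentially the same route as the paper: the paper's (very terse) proof likewise combines the generators from Theorem~\ref{theo_main}, the explicit trace computations, the $\NN^m$-grading of $I_m(\algL)$, and Remark~\ref{remark_key}(c) to see that the multidegree-$(1_r,1_s)$ component of $\Tr(\algL)_m$ is spanned by $z_rz_s$, which misses the extra generators of $I_m(\algL_3)$ and $I_m(\algL_9)$. Your finite check of the twelve multilinear degree-three words (six tabulated in Proposition~\ref{prop_tr}, six obtained by swapping $r$ and $s$) is exactly the verification the paper leaves implicit.
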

\begin{proof} We apply Proposition~\ref{prop_trace}, Theorem~\ref{theo_main}, part (c) of Remark~\ref{remark_key} and the fact that the algebra $I_m(\algL)$ has $\NN^m$-grading by multidegrees.
\end{proof}

%-------------------------------------------------------
\begin{prop}\label{prop_nilp} We have 
\begin{enumerate}
\item[$\bullet$] $\ncl(\algL_4^{\la})=\ncl(\algL_5)={\red \ncl(\algL_{11})}=3$ for all $\la\in\FF$; 

\item[$\bullet$] $\ncl(\algL_8)=4$; 

\item[$\bullet$] the algebras $\algL_1$, $\algL_2^{{\red\la\neq0}}$, $\algL_3$, $\algL_6$, $\algL_7^{\la}$, $\algL_9$, {\red $\algL_{10}$} are not nilpotent  for all $\la\in\FF$.
\end{enumerate}
\end{prop}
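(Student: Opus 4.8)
The plan is to split the list into the nilpotent algebras $\algL_4^{\la},\algL_5,\algL_8,\algL_{11}$ and the remaining, non-nilpotent ones, reading everything off the tableaux of Theorem~\ref{theo_Leib3}. For the nilpotency class it is convenient to let $\algL^{[k]}$ be the $\FF$-span of all products of $k$ elements of $\algL$ taken with an arbitrary bracketing; splitting a product at its outermost multiplication gives $\algL^{[k]}=\sum_{i=1}^{k-1}\algL^{[i]}\algL^{[k-i]}$, and (since these subspaces form a descending chain, which is routine to check) $\ncl(\algL)$ is the least $k$ with $\algL^{[k]}=0$. So an upper bound on $\ncl(\algL)$ will come from computing the chain $\algL^{[2]},\algL^{[3]},\dots$ on the basis, and the matching lower bound from one explicit nonzero product.

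For the three algebras of class $3$ this is immediate: in $\algL_4^{\la}$, $\algL_5$ and $\algL_{11}$ every product $e_ie_j$ lies in $\FF e_1$, and $e_1$ is a two-sided annihilator (its row and column in the tableau vanish), so $\algL^{[3]}\subseteq\algL\cdot\FF e_1+\FF e_1\cdot\algL=0$, whence $\ncl\leqslant3$; the nonzero products $e_2e_2=e_1$ (for $\algL_4^{\la},\algL_5$) and $e_3e_3=e_1$ (for $\algL_{11}$) give $\ncl\geqslant3$. The algebra $\algL_8$ needs one more layer: here $\algL^{[2]}\subseteq\FF\text{-span}\{e_1,e_2\}$, and since $e_2$ is a two-sided annihilator while $e_1\algL\subseteq\FF e_2$, one finds $\algL^{[3]}\subseteq\FF e_2$ and then $\algL^{[4]}=0$, so $\ncl\leqslant4$; the product $(e_3e_3)e_3=e_1e_3=e_2\neq0$ shows $\ncl\geqslant4$, hence $\ncl(\algL_8)=4$.

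For the non-nilpotent algebras the idea is dual: I would exhibit, for each one, a single element whose iterated products never die, using right multiplication $R_{e_3}$ by $e_3$. If $v\neq0$ satisfies $ve_3=cv$ with $c\in\FF^{\times}$, then the left-normed product $(\cdots((ve_3)e_3)\cdots e_3)$ of $k+1$ factors equals $c^kv\neq0$ for every $k\geqslant0$, so no $\algL^{[k]}$ vanishes. Such an eigenvector is visible directly in most cases: $v=e_1,\,c=-2$ for $\algL_1$; $v=e_1,\,c=\la$ for $\algL_2^{\la\neq0}$; $v=e_2,\,c=-1$ for $\algL_3$; $v=e_1+e_2,\,c=1$ for both $\algL_6$ and $\algL_9$; and $v=e_1,\,c=1$ for $\algL_{10}$.

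The one point that resists a single formula, and which I expect to be the main obstacle, is $\algL_7^{\la}$, since the relevant eigenvector depends on $\la$. I would argue uniformly in the parameter by an operator-theoretic observation: the subspace $W=\FF\text{-span}\{e_1,e_2\}$ is $R_{e_3}$-invariant with $e_1e_3=e_2$ and $e_2e_3=\la e_1+e_2$, so $\tr(R_{e_3}|_W)=1\neq0$; a nilpotent operator has zero trace, so $R_{e_3}|_W$ is not nilpotent and, as $\FF=\CC$, has a nonzero eigenvalue $c$ with eigenvector $v\in W\setminus\{0\}$, reducing $\algL_7^{\la}$ to the previous paragraph for every $\la$ at once. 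Assembling these cases completes the proof.
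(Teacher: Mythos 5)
Your proof is correct and follows essentially the same route as the paper: exhibit a non-vanishing chain of left-normed products $(\cdots((v e_3)e_3)\cdots)e_3$ for the non-nilpotent algebras, and bound the nilpotency class directly from the tableaux for $\algL_4^{\la}$, $\algL_5$, $\algL_8$, $\algL_{11}$. You supply more justification than the paper does --- the eigenvector/trace argument for $\algL_7^{\la}$ replaces the paper's bare assertion that the iterated products of $e_2$ by $e_3$ are non-zero, and the annihilator computation makes the upper bounds on $\ncl$ explicit where the paper calls them ``trivial'' --- but the underlying idea is identical.
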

\begin{proof} For the algebras $\algL_1$, $\algL_2^{{\red\la\neq0}}$, $\algL_3$, $\algL_6$, $\algL_7^{\la}$ the elements $(\cdots ((e_2 e_3) e_3) \cdots )e_3$ are non-zero. Similarly, $(\cdots ((e_1 e_3) e_3) \cdots )e_3\neq0$ in $\algL_9$ and in {\red $\algL_{10}$}. Since $(e_3 e_3)e_3=e_2\neq0$ in $\algL_8$, we can see that  $\ncl(\algL_8)=4$. The cases of $\algL_4^{\la}$, $\algL_5$ and  {\red $\algL_{11}$} are trivial.
\end{proof}

%-------------------------------------------------------
\begin{cor}\label{cor_nilp}
For an algebra $\algL\in{\bf L}_3$, the following four conditions are equivalent:
\begin{enumerate}
\item[(a)] the algebra of invariants $I_m(\algL)$ is $\FF$ for all $m\geqslant1$;

\item[(b)] the algebra of invariants $I_1(\algL)$ is $\FF$;

\item[(c)] the algebra $\algL$ is nilpotent;

\item[(d)] $\tr(\chi_0 \chi_1)=\tr((\chi_0 \chi_1) \chi_1) = 0$ for $\algL$.
\end{enumerate}
\end{cor}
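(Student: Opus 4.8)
The plan is to prove the equivalence of the four conditions (a)--(d) by establishing a cycle of implications, drawing on the explicit descriptions already obtained. The implications (a)$\Rightarrow$(b) is trivial, and (c)$\Rightarrow$(a) follows immediately from Theorem~\ref{theo_main} combined with Proposition~\ref{prop_nilp}: the three nilpotent algebras $\algL_4^{\la}$, $\algL_5$, $\algL_8$, $\algL_{11}$ are precisely those for which Theorem~\ref{theo_main} gives $I_m(\algL)=\FF$. The substantive directions are therefore (b)$\Rightarrow$(d)$\Rightarrow$(c), which close the cycle. My strategy is to verify each of these directly against the classification in Theorem~\ref{theo_Leib3}, using Theorem~\ref{theo_main} for the invariant-theoretic content and Proposition~\ref{prop_tr} for the trace content.

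For (b)$\Rightarrow$(d): I would argue contrapositively. Suppose $\algL$ is one of the non-nilpotent algebras, namely $\algL_1$, $\algL_2^{\la\neq0}$, $\algL_3$, $\algL_6$, $\algL_7^{\la}$, $\algL_9$, or $\algL_{10}$ (by Proposition~\ref{prop_nilp}). For each of these, Theorem~\ref{theo_main} shows that $I_1(\algL)$ strictly contains $\FF$: indeed $z_1\in I_1(\algL)$ for $\algL_1,\algL_2^{\la\neq0},\algL_3,\algL_7,\algL_9,\algL_{10}$, and $z_1^2\in I_1(\algL_6)$. Hence if $I_1(\algL)=\FF$ then $\algL$ must be nilpotent, which is exactly (c), and (d) then follows since all traces of a nilpotent algebra vanish (the relevant operators are nilpotent, so their traces are zero). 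So in fact the clean way to organize this is to prove (b)$\Rightarrow$(c) directly from Theorem~\ref{theo_main} (only nilpotent algebras have $I_1(\algL)=\FF$), and then feed into (c)$\Rightarrow$(a)$\Rightarrow$(b).

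For the remaining equivalence I would show (d)$\Rightarrow$(c) and (c)$\Rightarrow$(d), again by inspecting the table in Proposition~\ref{prop_tr}. For (c)$\Rightarrow$(d): when $\algL$ is nilpotent, all left and right multiplication operators are nilpotent, so every operator trace vanishes; in particular $\tr(\chi_0\chi_1)=0$ and $\tr((\chi_0\chi_1)\chi_1)=0$. For (d)$\Rightarrow$(c), contrapositively, I would run through the non-nilpotent algebras and exhibit in each case that at least one of the two displayed traces is nonzero, reading directly from Proposition~\ref{prop_tr}: for $\algL_1,\algL_2^{\la\neq0},\algL_3,\algL_7,\algL_9,\algL_{10}$ we have $\tr(\chi_0\chi_1)$ a nonzero multiple of $z_1$; for $\algL_6$ we have $\tr((\chi_0\chi_1)\chi_1)=2z_1^2\neq0$. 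Thus whenever both traces vanish, $\algL$ cannot be non-nilpotent.

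The main obstacle is not conceptual but organizational: one must be sure that the list of non-nilpotent algebras is exhaustive and that for \emph{each} of them the relevant invariant or trace is genuinely nonzero, with particular care for the exceptional algebra $\algL_6$. There the single invariant $z_1$ is \emph{not} invariant (only $z_1^2$ is), and correspondingly the degree-one trace $\tr(\chi_0\chi_1)$ vanishes; it is only the degree-two trace $\tr((\chi_0\chi_1)\chi_1)=2z_1^2$ that is nonzero. This is precisely why the formulation of (d) includes the second trace $\tr((\chi_0\chi_1)\chi_1)$ rather than $\tr(\chi_0\chi_1)$ alone. Verifying that these two traces suffice to detect non-nilpotency across all seven non-nilpotent algebras is the crux, and it is handled entirely by direct reference to the already-computed tables, with no new calculation required.
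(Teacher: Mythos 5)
Your proposal follows essentially the same route as the paper: the paper's proof is a one-line citation of Proposition~\ref{prop_nilp}, Theorem~\ref{theo_main} and Proposition~\ref{prop_tr}, and your cycle of implications is exactly the case-by-case verification those citations implicitly encode. The structure (a)$\Rightarrow$(b)$\Rightarrow$(c)$\Rightarrow$(a) together with (c)$\Leftrightarrow$(d) is sound, and the tables do supply everything needed.

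One small slip in your verification of (d)$\Rightarrow$(c): you assert that for $\algL_2^{\la\neq0}$ the trace $\tr(\chi_0\chi_1)$ is a nonzero multiple of $z_1$, but Proposition~\ref{prop_tr} gives $\tr(\chi_0\chi_1)=(\la-1)z_1$, which vanishes when $\la=1$. The algebra $\algL_2^{1}$ is therefore not detected by the degree-one trace alone; it is caught, exactly as $\algL_6$ is, by the degree-two trace $\tr((\chi_0\chi_1)\chi_1)=(1+\la^2)z_1^2=2z_1^2\neq0$. (Note also that over $\CC$ the coefficient $1+\la^2$ vanishes for $\la=\pm i$, but in those cases $\la-1\neq0$, so for every $\la\neq0$ at least one of the two traces in condition (d) is nonzero.) This does not break the argument, but the exhaustive check you identify as the crux should list $\algL_2^{1}$ alongside $\algL_6$ as a case where the second trace in (d) is genuinely needed.
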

\begin{proof}
The claim of the corollary follows from Proposition~\ref{prop_nilp}, Theorem~\ref{theo_main} and Proposition~\ref{prop_tr}.
\end{proof}

%-------------------------------------------------------
\begin{cor}\label{cor_separ}
Two non-nilpotent algebras from ${\bf L}_3$ can be distinguished by means of the traces of degrees $\leqslant2$ and the dimensions of its groups of automorphisms. That is, given $\algL\in{\bf L}_3$, denote $d=\dim\Aut(\algL)$. Then 
 
\begin{longtable}{lllllll}
$\algL  \simeq  \algL_1 $&$ \Longleftrightarrow  $&$ \tr(\chi_1\chi_0)\neq0, $&$ d=2.$    \\

$\algL  \simeq  \algL_2^{\la\neq 0} $&$ \Longleftrightarrow  $&$ \tr(\chi_1\chi_0)\neq0,  $&$ d=3, $&$ \tr(\chi_0\chi_1)=(\la-1)\tr(\chi_1\chi_0).$  \\

%$\algL  \simeq \algL_2^{0} $&$ \Longleftrightarrow  $&$ \tr(\chi_1\chi_0)\neq0, $&$ d=4. $  \\

$\algL  \simeq \algL_3 $&$ \Longleftrightarrow  $&$ \tr(\chi_1\chi_0)\neq0, $&$ d=3,  $&$  \tr(\chi_0\chi_1)=-\tr(\chi_1\chi_0).$  \\

$\algL \simeq \algL_6 $&$ \Longleftrightarrow  $&$  \tr(\chi_1\chi_0)=0, $&$ d=2, $&$\tr(\chi_0\chi_1)=0. $  \\

$\algL  \simeq  \algL_7^{\la\neq 0} $&$ \Longleftrightarrow  $&$ \tr(\chi_1\chi_0)=0, $&$ d= 2, $&$ \tr(\chi_0\chi_1)\neq0,\;\; \tr((\chi_0\chi_1)\chi_1)=(1+2\la)\tr(\chi_0\chi_1)^2.$ \\

$\algL  \simeq  \algL_7^{0} $&$ \Longleftrightarrow  $&$\tr(\chi_1\chi_0)=0, $&$ d=3.  $   \\ 

$\algL  \simeq  \algL_9 $&$ \Longleftrightarrow  $&$ \tr(\chi_1\chi_0)=0, $&$ d=2,  $&$ \tr(\chi_0\chi_1)\neq0, \;\; \tr((\chi_0\chi_1)\chi_1)=\tr(\chi_0\chi_1)^2.$  \\ 

\red $\algL  \simeq  \algL_{10} $& \red $ \Longleftrightarrow  $& \red $\tr(\chi_1\chi_0)=0, $& \red $ d=4.  $   \\ 
\end{longtable}
 \end{cor}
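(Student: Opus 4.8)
The plan is to prove Corollary~\ref{cor_separ} by combining the three computational inputs already established: the explicit automorphism groups (Theorem~\ref{theo_aut}), the dimensions of these groups (Corollary~\ref{cor_aut}), and the trace formulas (Proposition~\ref{prop_tr}). First I would identify, using Proposition~\ref{prop_nilp}, exactly which algebras are non-nilpotent: these are $\algL_1$, $\algL_2^{\la\neq0}$, $\algL_3$, $\algL_6$, $\algL_7^{\la}$, $\algL_9$, and $\algL_{10}$, so the table in the statement is exhaustive. The strategy is to verify each biconditional in the table by showing that the listed data (the value of $\tr(\chi_1\chi_0)$, possibly $\tr(\chi_0\chi_1)$ and $\tr((\chi_0\chi_1)\chi_1)$, together with $d=\dim\Aut(\algL)$) holds for the named algebra, and conversely that no other non-nilpotent algebra in ${\bf L}_3$ shares that same data.

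For the forward direction ($\Rightarrow$), I would simply read off the trace values from Proposition~\ref{prop_tr} with $r=s=1$, recalling the notational conventions $\tr(\chi_1\chi_0)\in\FF$ is identified with its coefficient on $z_1$ and similarly $\tr((\chi_0\chi_1)\chi_1)$ with its coefficient on $z_1^2$; then read off $d$ from Corollary~\ref{cor_aut} applied via Theorem~\ref{theo_aut}. Concretely, one checks $\dim\Aut(\algL_1)=2$, $\dim\Aut(\algL_2^{\la\neq0})=3$, $\dim\Aut(\algL_3)=3$, $\dim\Aut(\algL_6)=2$, $\dim\Aut(\algL_7^{\la\neq0})=2$, $\dim\Aut(\algL_7^0)=3$, $\dim\Aut(\algL_9)=2$, and $\dim\Aut(\algL_{10})=4$ by counting free parameters $\al_i$ in each matrix description (as justified by the dimension argument in the proof of Corollary~\ref{cor_aut}). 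The trace identities such as $\tr(\chi_0\chi_1)=(\la-1)\tr(\chi_1\chi_0)$ for $\algL_2^{\la\neq0}$ or $\tr((\chi_0\chi_1)\chi_1)=(1+2\la)\tr(\chi_0\chi_1)^2$ for $\algL_7^{\la}$ are immediate consequences of the corresponding rows of Proposition~\ref{prop_tr}.

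The reverse direction ($\Leftarrow$) is where the actual content lies: I must argue that the stated invariants \emph{separate} the seven non-nilpotent algebras. The key observation is that the data splits them by a short decision tree. The condition $\tr(\chi_1\chi_0)\neq0$ singles out $\{\algL_1,\algL_2^{\la\neq0},\algL_3\}$; within this set, $d=2$ forces $\algL_1$, while the two algebras with $d=3$ are then separated by the ratio $\tr(\chi_0\chi_1)/\tr(\chi_1\chi_0)$, which equals $\la-1$ for $\algL_2^{\la\neq0}$ and $-1$ for $\algL_3$ (noting that for $\algL_2^{\la\neq0}$ the value $\la-1$ can equal $-1$ only when $\la=0$, which is excluded). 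On the branch $\tr(\chi_1\chi_0)=0$ we have $\{\algL_6,\algL_7^{\la},\algL_9,\algL_{10}\}$; here $d=4$ isolates $\algL_{10}$ and $\tr(\chi_0\chi_1)=0$ isolates $\algL_6$, whereas the remaining algebras $\algL_7^{\la\neq0}$, $\algL_7^0$, $\algL_9$ all have $\tr(\chi_0\chi_1)\neq0$ except $\algL_7^0$ which is distinguished by $d=3$. Finally $\algL_7^{\la\neq0}$ and $\algL_9$, both with $d=2$ and $\tr(\chi_0\chi_1)\neq0$, are separated by the degree-two trace: the ratio $\tr((\chi_0\chi_1)\chi_1)/\tr(\chi_0\chi_1)^2$ equals $1+2\la$ for $\algL_7^{\la}$ and $1$ for $\algL_9$, and since $\la\neq0$ we have $1+2\la\neq1$.

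The main obstacle is not any single deep step but the bookkeeping required to confirm that the decision tree is genuinely complete and free of collisions across the continuous parameter $\la$; in particular one must check that the recovered invariants are \emph{isomorphism invariants} (independent of the chosen basis). This holds because traces lie in $I_m(\algL)$ by Remark~\ref{remark_key}(b) and $\dim\Aut(\algL)$ is manifestly an isomorphism invariant, so any basis change induces an algebra isomorphism preserving all listed quantities; hence the implications may be verified on the canonical representatives of Theorem~\ref{theo_Leib3}. I would present the proof as a single paragraph invoking the three cited results and then exhibit the decision tree above, remarking that the non-collision of the family $\algL_2^{\la\neq0}$ (resp. $\algL_7^{\la}$) across distinct $\la$ follows from the injectivity of $\la\mapsto\la-1$ (resp. $\la\mapsto1+2\la$) on $\FF$, which closes every case.
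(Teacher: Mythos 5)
Your proposal is correct and follows exactly the route the paper intends (the paper states this corollary without a written proof, leaving it as an immediate consequence of Proposition~\ref{prop_nilp}, Theorem~\ref{theo_aut}/Corollary~\ref{cor_aut}, and Proposition~\ref{prop_tr}): read off the dimensions and the trace values for the canonical representatives and check that the resulting decision tree has no collisions, including across the parameter families via $\la\mapsto\la-1$ and $\la\mapsto1+2\la$. All the numerical data you cite agrees with the paper's tables, and your remark that the quantities are isomorphism invariants correctly justifies verifying everything on the normal forms of Theorem~\ref{theo_Leib3}.
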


%==========================================================
%==========================================================
\section*{Acknowledgement}

We would like to express our sincere gratitude to the anonymous referee for the valuable and constructive comments, which have helped us improve the quality and clarity of our paper.

%\begin{enumerate}
%\item[$\bullet$] $\algL \simeq \algL_1$ \text{ if and only if } $\tr(\chi_1\chi_0)\neq0$ and $d=2$;
%
%\item[$\bullet$] $\algL \simeq \algL_2^{{\red\la\neq0}}$ for some $\la\in\FF^{\times}$ \text{ if and only if }  $\tr(\chi_1\chi_0)\neq0$, $\tr(\chi_0\chi_1)=(\la-1)\tr(\chi_1\chi_0)$ and $d=3$;
%
%\item[$\bullet$] $\algL \simeq\algL_2^{0}$ \text{ if and only if } $\tr(\chi_1\chi_0)\neq0$ and $d=4$;
%
%\item[$\bullet$] $\algL \simeq\algL_3$ \text{ if and only if } $\tr(\chi_1\chi_0)\neq0$, $\tr(\chi_1\chi_0) = - \tr(\chi_0\chi_1)$ and $d=3$;
%
%\item[$\bullet$] $\algL \simeq\algL_6$ \text{ if and only if } $\tr(\chi_1\chi_0)=\tr(\chi_0\chi_1)=0$ and $d=2$;
%
%\item[$\bullet$] $\algL \simeq\algL_7^{\la}$ for some $\la\in\FF^{\times}$ \text{ if and only if } $\tr(\chi_1\chi_0)=0$, $\tr(\chi_0\chi_1)\neq0$, $\tr((\chi_0\chi_1)\chi_1)=(1+2\la)\tr(\chi_0\chi_1)^2$ and $d=2$; 
%
%\item[$\bullet$] $\algL \simeq\algL_7^{0}$ \text{ if and only if } $\tr(\chi_1\chi_0)=0$ and $d=3$; 
%
%\item[$\bullet$] $\algL \simeq\algL_9$ \text{ if and only if }  $\tr(\chi_1\chi_0)=0$, $\tr(\chi_0\chi_1)\neq0$,  and $d=2$.
%\end{enumerate}

%bibliographystyle{myplain}
%\bibliographystyle{siam}

\bibliographystyle{abbrvurl}
\bibliography{literature}

@article{Sibirskii68,
    AUTHOR = {Sibirskii, K.S.},
     TITLE = {Algebraic invariants of a system of matrices},
       journal = {Sibirsk. Mat. Z.},
     VOLUME = {9},   
      YEAR = {1968},
     PAGES = {152--164},
}

@article{Procesi76,
    AUTHOR = {Procesi, C.},
     TITLE = {The invariant theory of $n\times n$ matrices},
       journal = {Advances in Math.},
     VOLUME = {19},   
      YEAR = {1976},
     PAGES = {306--381},
       doi = {10.1016/0001-8708(76)90027-X}
}

@article{Donkin92a,
    AUTHOR = {Donkin, S.},
     TITLE = {Invariants of several matrices},
       journal = {Invent. Math.},
     VOLUME = {110},   
      YEAR = {1992},
     PAGES = {389--401},
       doi = {10.1007/BF01231338}
}

@article{Procesi84,
    AUTHOR = {Procesi, C.},
     TITLE = {Computing with $2\times 2$ matrices},
       journal = {Journal of Algebra},
     VOLUME = {87},   
      YEAR = {1984},
     PAGES = {342--359},
       doi = {10.1016/0021-8693(84)90141-8}
}

@article{DKZ02,
    AUTHOR = {Domokos, M. and Kuzmin, S.G. and Zubkov, A.N.},
     TITLE = {Rings of matrix invariants in positive characteristic},
   journal = {Journal of Pure and Applied Algebra},
     VOLUME = {176},   
         no = {1},
      YEAR = {2002},
     PAGES = {61--80},
       doi = {10.1016/S0022-4049(02)00117-2}
}

@article{Lopatin_Sib,
    AUTHOR = {Lopatin, Artem},
     TITLE = {The invariant ring of triples of $3\times 3$ matrices over a field of arbitrary characteristic},
       journal = {Siberian Mathematical Journal},
     VOLUME = {45},   
     NUMBER = {3},
      YEAR = {2004},
     PAGES = {513--521},
       doi = {10.1023/B:SIMJ.0000028616.88011.3d}
}

@article{Lopatin_Comm1,
    AUTHOR = {Lopatin, Artem},
     TITLE = {The algebra of invariants of $3\times 3$ matrices over a field of arbitrary characteristic},
       journal = {Communications in Algebra},
     VOLUME = {32},   
     NUMBER = {7},
      YEAR = {2004},
     PAGES = {2863--2883},
       doi = {10.1081/AGB-120037420}
}

@article{Lopatin_Comm2,
    AUTHOR = {Lopatin, Artem},
     TITLE = {Relatively free algebras with the identity $x^3=0$},
       journal = {Communications in Algebra},
     VOLUME = {33},   
     NUMBER = {10},
      YEAR = {2005},
     PAGES = {3583--3605},
       doi = {10.1080/00927870500243254}
}

@article{Teranishi86,
    AUTHOR = {Teranishi, Y.},
     TITLE = {The ring of invariants of matrices},
       journal = {Nagoya Math. J.},
     VOLUME = {104},   
      YEAR = {1986},
     PAGES = {149--161},
       doi = {10.1017/S0027763000022728}
}

@article{Djokovic07,
    AUTHOR = {\DJ{}okovi\'c, D.},
     TITLE = { Poincar\'e series of some pure and mixed trace algebras of two generic matrices},
       journal = {Journal of Algebra},
     VOLUME = {309},   
      YEAR = {2007},
     PAGES = {654--671},
       doi = {10.1016/j.jalgebra.2006.09.018}
}

@article{Drensky_Sadikova_4x4,
    AUTHOR = {Drensky, Vesselin and Sadikova, Liliya},
     TITLE = {Generators of invariants of two $4\times 4$ matrices},
       journal = {C. R. Acad. Bulgare Sci.},
     VOLUME = {59},   
      YEAR = {2006},
      NUMBER = {5},
     PAGES = {477--484},
}

@article{zubkov2018,
    AUTHOR = {Zubkov, Alexandr N. and Shestakov, Ivan},
     TITLE = {Invariants of {${\rm G}_2$} and {${\rm Spin}(7)$}  in positive characteristic},
  JOURNAL = {Transformation Groups},
    VOLUME = {23},
      YEAR = {2018},
    number = {2},
     PAGES = {555--588},
       doi = {10.1007/s00031-017-9435-8}
}

@article{schwarz1988,
    AUTHOR = {G.W. Schwarz},
     TITLE = {Invariant theory of {${\rm G}_2$} and {${\rm Spin}_7$}},
  JOURNAL = {Comment. Math. Helvetici},
    VOLUME = {63},
      YEAR = {1988},
     PAGES = {624--663},
       doi = {10.1007/BF02566782}
}

@article{LZ_1,
    AUTHOR = {Lopatin, Artem and Zubkov, Alexandr N.},
     TITLE = {Classification of {${\mathrm G}_2$}-orbits for pairs of octonions},
  JOURNAL = {Journal of Pure and Applied Algebra},
    VOLUME = {229},
      YEAR = {2025},
     PAGES = {107875},
       doi = {10.1016/j.jpaa.2025.107875}
}

@article{LZ_2,
    AUTHOR = {Lopatin, Artem and Zubkov, Alexandr N.},
     TITLE = {Separating {${\mathrm G}_2$}-invariants of several octonions},
  JOURNAL = {Algebra Number Theory},
    VOLUME = {18},
      YEAR = {2024},
    number = {12},
     PAGES = {2157--2177},
      doi = {10.2140/ant.2024.18.2157}
}

@article{Alvarez_Lopatin_2025,
    AUTHOR = {Alvarez, María Alejandra and Lopatin, Artem},
    TITLE = {Polynomial invariants for low dimensional algebras},
    JOURNAL = {submitted, arXiv: 2503.05337},
    YEAR = {2025}
}

@article{Rikhsiboev_Rakhimov_2012_AIP,
    AUTHOR = {Rikhsiboev, Ikrom M. and Rakhimov, Isamiddin S.},
     TITLE = {Classification of three dimensional complex {L}eibniz algebras},
  JOURNAL = {AIP Conf. Proc.},
    VOLUME = {1450},
      YEAR = {2012},
     PAGES = {358--362},
        doi = {10.1063/1.4724168}
}

@article{Casas_Insua_Ladra_Ladra_2012_LAA,
    AUTHOR = {Casas, J.M. and Insua, M.A. and Ladra, M. and Ladra, S.},
     TITLE = {An algorithm for the classification of 3-dimensional complex {L}eibniz algebras},
  JOURNAL = {Linear Algebra and its Applications},
    VOLUME = {436},
      YEAR = {2012},
    number = {9},
     PAGES = {3747--3756},
       doi = {10.1016/j.laa.2011.11.039}
}

@article{deMello_Souza_2023,
    AUTHOR = {de Mello, Thiago Castilho and Souza, Manuela da Silva},
     TITLE = {Polynomial identities and images of polynomials on null-filiform {L}eibniz algebras},
  JOURNAL = {Linear Algebra and its Applications},
    VOLUME = {679},
      YEAR = {2023},
     PAGES = {246--260},
     doi = {10.1016/j.laa.2023.09.016}
}

@article{Iltyakov_1995,
    AUTHOR = {Iltyakov, A.V.},
     TITLE = {On invariants of the group of automorphisms of {A}lbert algebras},
  JOURNAL = {Communications in Algebra},
    VOLUME = {23},
      YEAR = {1995},
    number = {11},
     PAGES = {4047--4060},
      doi = {10.1080/00927879508825448}
}

@article{Polikarpov_1991,
    AUTHOR = {Polikarpov, S.V.},
     TITLE = {Free affine {A}lbert algebras},
  JOURNAL = {Siberian Mathematical Journal},
    VOLUME = {32},
      YEAR = {1991},
    number = {6},
     PAGES = {1008–1016},
       doi = {10.1007/BF00971207}
}

@article{FKS_2025,
    AUTHOR = {Fehlberg J\'{u}nior, R. and Kaygorodov, I. and Saydaliyev, A.},
     TITLE = {The  geometric classification of symmetric {L}eibniz algebras},
  JOURNAL = {Communications in Mathematics},
    VOLUME = {33},
      YEAR = {2025},
    number = {1},
     note = {paper no. 10}
}

@article{AKS,
    AUTHOR = {Ayupov, Sh. and  Khudoyberdiyev, A. and Shermatova, Z.},
     TITLE = {On complete {L}eibniz algebras},
  JOURNAL = {International Journal of Algebra and Computation},
    VOLUME = {32},
      YEAR = {2022},
    number = {2},
     PAGES = {265--288},
       doi = {10.1142/S0218196722500138}
}

@article{Bloh_1965,
    AUTHOR = {Bloh, A.},
     TITLE = {On a generalization of the concept of {L}ie algebra},
  JOURNAL = {Soviet Math. Dokl.},
    VOLUME = {6},
      YEAR = {1965},
     PAGES = {1450--1452}
}

@article{bonez,
    AUTHOR = {Bonezzi, R. and Hohm, O.},
     TITLE = {Leibniz gauge theories and infinity structures},
  JOURNAL = {Communications in Mathematical Physics},
    VOLUME = {377},
      YEAR = {2020},
    number = {3},
     PAGES = {2027--2077},
       doi = {10.1007/s00220-020-03785-2}
}

@article{CK,
    AUTHOR = {Choriyeva, I. and Khudoyberdiyev, A.},
     TITLE = {Classification of five-dimensional symmetric {L}eibniz algebras},
  JOURNAL = {Bulletin of the Iranian Mathematical Society},
    VOLUME = {50},
      YEAR = {2024},
    number = {3},
     PAGES = {33},
       doi = {10.1007/s41980-024-00874-z}
}

@article{FW,
    AUTHOR = {Feldvoss, J. and Wagemann, F.},
     TITLE = {On the cohomology of solvable {L}eibniz algebras},
  JOURNAL =  {Indagationes Mathematicae (N.S.)},
    VOLUME = {35},
      YEAR = {2024},
    number = {1},
     PAGES = {87--113},
       doi = {10.1016/j.indag.2023.09.002}
}

@article{strow20,
    AUTHOR = {Strobl, T. and Wagemann, F.},
     TITLE = {Enhanced {L}eibniz algebras: structure theorem and induced {L}ie 2-algebra},
  JOURNAL = {Communications in Mathematical Physics},
    VOLUME = {376},
      YEAR = {2020},
    number = {1},
     PAGES = {51--79},
       doi = {10.1007/s00220-019-03522-4}
}

@article{k23,
    AUTHOR = {Kaygorodov, I.},
     TITLE = {Non-associative algebraic structures: classification and structure},
  JOURNAL = {Communications in Mathematics},
    VOLUME = {32},
      YEAR = {2024},
    number = {3},
     PAGES = {1--62},
       doi = {10.46298/cm.11419}
}

@article{KKP,
    AUTHOR = {Kaygorodov, I. and  Khrypchenko, M. and Páez-Guillán, P.},
     TITLE = {The geometric classification of non-associative algebras: a survey},
  JOURNAL = {Communications in Mathematics},
    VOLUME = {32},
      YEAR = {2024},
    number = {2},
     PAGES = {185--284},
       doi = {10.46298/cm.14458}
}

@article{kky22,
    AUTHOR = {Kaygorodov, I. and Kudaybergenov, K. and Yuldashev, I.},
     TITLE = {Local derivations of semisimple {L}eibniz algebras},
  JOURNAL = {Communications in Mathematics},
    VOLUME = {30},
      YEAR = {2022},
    number = {2},
     PAGES = {1--12},
       doi = {10.46298/cm.9274}
}

@article{KW01,
    AUTHOR = {Kinyon, M. and Weinstein, A.},
     TITLE = {Leibniz algebras, {C}ourant algebroids, and multiplications on reductive homogeneous spaces},
  JOURNAL = {American Journal of Mathematics},
    VOLUME = {123},
      YEAR = {2001},
    number = {3},
     PAGES = {525--550},
       doi = {10.1353/ajm.2001.0017}
}

@article{kotov20,
    AUTHOR = {Kotov, A. and Strobl, T.},
     TITLE = {The embedding tensor, {L}eibniz-{L}oday algebras, and their higher gauge theories},
  JOURNAL = {Communications in Mathematical Physics},
    VOLUME = {376},
      YEAR = {2020},
    number = {1},
     PAGES = {235--258},
       doi = {10.1007/s00220-019-03569-3}
}

@article{KM,
    AUTHOR = {Khudoyberdiyev, A. and Muratova, K.},
     TITLE = {Solvable {L}eibniz superalgebras whose nilradical has the characteristic sequence $(n-1,1|m)$ and nilindex $n+m$},
  JOURNAL = {Communications in Mathematics},
    VOLUME = {32},
      YEAR = {2024},
    number = {2},
     PAGES = {27--54},
       doi = {10.46298/cm.11369}
}

@article{L93a,
    AUTHOR = {Loday, J.-L.},
     TITLE = {A noncommutative version of {L}ie algebras: the {L}eibniz algebras},
  JOURNAL = {Enseign. Math.},
    VOLUME = {39},
      YEAR = {1993},
    number = {3-4},
     PAGES = {269--293}
}

@article{L93,
    AUTHOR = {Loday, J.-L. and Pirashvili, T.},
     TITLE = {Universal enveloping algebras of {L}eibniz algebras and (co)homology},
  JOURNAL = {Mathematische Annalen},
    VOLUME = {296},
      YEAR = {1993},
    number = {1},
     PAGES = {139--158},
       doi = {10.1007/BF01445099}
}

@article{MS,
    AUTHOR = {Mondal, B. and Saha, R.},
     TITLE = {Cohomology, deformations and extensions of {R}ota-{B}axter {L}eibniz algebras},
  JOURNAL = {Communications in Mathematics},
    VOLUME = {30},
      YEAR = {2022},
    number = {2},
     PAGES = {93--117},
       doi = {10.46298/cm.10295}
}

@article{TX,
    AUTHOR = {Tang, R. and Xu, N. and Sheng, Y.},
     TITLE = {Symplectic structures, product structures and complex structures on {L}eibniz algebras},
  JOURNAL = {Journal of Algebra},
    VOLUME = {647},
      YEAR = {2024},
     PAGES = {710--743},
       doi = {10.1016/j.jalgebra.2024.03.003}
}

@article{PIs_Novikov_dim2_2025,
  title={Novikov algebras in low dimension: identities, images and codimensions},
  author={Ferreira dos Santos, Iritan and Kuz'min, Alexey M. and Lopatin, Artem},
    journal={Journal of Algebra},
  pages={1-28},
volume={674},
  year={2025},
DOI = {10.1016/j.jalgebra.2025.03.015}
}

@Article{diniz2024isomorphism,
 Author = {Diniz, Diogo and Gon{\c{c}}alves, Dimas Jos{\'e} and da Silva, Viviane Ribeiro Tomaz and Souza, Manuela},
 Title = {The isomorphism problem in the context of {PI}-theory for two-dimensional {Jordan} algebras},
 Journal = {Finite Fields and their Applications},
 ISSN = {1071-5797},
 Volume = {94},
 Note = {paper no. 102344},
 Year = {2024},
 DOI = {10.1016/j.ffa.2023.102344}
}

@book{Bruns_Gubeladze_book_2009,
    AUTHOR = {Gubeladze, Joseph and Bruns, Winfried},
     TITLE = {Polytopes, rings, and K-theory},
 PUBLISHER = {Springer Monographs in Mathematics},
      YEAR = {2009},
      url = {https://link.springer.com/book/10.1007/b105283}
}

@book{Weyl_book,
    AUTHOR = {Weyl, Hermann},
     TITLE = {The Classical Groups. Their Invariants and Representations},
 PUBLISHER = {Princeton University Press},
      YEAR = {1939}
}

%====================================================
\end{document}